\documentclass{IEEEtran}
\usepackage{cite}
\usepackage{amsmath,amssymb,amsfonts}
\usepackage{algorithmic}
\usepackage{graphicx}
\usepackage{textcomp}
\def\BibTeX{{\rm B\kern-.05em{\sc i\kern-.025em b}\kern-.08em
    T\kern-.1667em\lower.7ex\hbox{E}\kern-.125emX}}
    

\usepackage{mathtools}
\usepackage{epsfig} 
\usepackage[dvipsnames]{xcolor}

\usepackage{pgf,tikz,pgfplots}
\pgfplotsset{compat=1.15}
\usepackage{mathrsfs}
\usetikzlibrary{arrows}
\definecolor{darkred}{rgb}{0.8, 0.25, 0.33}
\usepackage{subcaption}

\newcommand{\norm}[1]{\left\lvert#1\right\rvert}

\newcommand{\normC}[1]{\left\lVert#1\right\rVert}
\newcommand{\sigmaU}[1]{\bar{\sigma}(#1)}

\newcommand{\dtau}{\mathrm{d}\tau}
\newcommand\eps{\varepsilon}

\newcommand\M{M}
\newcommand\N{N}
\DeclareMathOperator{\e}{e}

\newtheorem{theorem}{Theorem}
\newtheorem{definition}{Definition}
\newtheorem{lemma}{Lemma}
\newtheorem{corol}{Corollary}
\newtheorem{remark}{Remark}

\newtheorem{prop}{Proposition}
\newtheorem{example}{Example}
\newenvironment{proof}{{\it Proof :~}}{\hfill$\square$\\}

\begin{document}
\title{Necessary and sufficient stability condition for time-delay systems arising from Legendre polynomial approximation}
\author{Bajodek, M. and Gouaisbaut, F. and Seuret, A. 
\thanks{Authors are with LAAS-CNRS, Université de Toulouse, CNRS, UPS, Toulouse, France (e-mail: mbajodek, fgouaisbaut, aseuret @laas.fr).}}

\maketitle

\begin{abstract}
Recently, necessary conditions of stability for time-delay systems based on the handling of the Lyapunov-Krasovskii functional have been studied in the literature giving rise to a new paradigm. Interestingly, the necessary condition for stability developed by Gomez et al. has been proven to be sufficient. It is presented as a simple positivity test of a matrix issued from the Lyapunov matrix. The present paper proposes an extension of this result, where the uniform discretization of the state has been replaced by projections on the first Legendre polynomials. Like in Gomez et al., the stability is guaranteed regarding the sign of the eigenvalues of a matrix, whose size is given analytically from convergence arguments. Compared to them, by relying on the supergeometric convergence rate of the Legendre approximation, the required order to ensure stability can be remarkably reduced. Thanks to this significant modification, it is possible to find an outer estimate of the stability regions, which converges to the expected stability regions with respect to the number of projections, as illustrated in the example section.
\end{abstract}

\begin{IEEEkeywords}
Time-delay systems, Stability analysis, Lyapunov-Krasovskii functionals, Approximation theory.
\end{IEEEkeywords}

\section{Introduction}

Delays appear unavoidably as soon as time processing or analog-to-digital converters interfere in the communication between interconnected dynamical systems. Numerous numerical methods have been deployed to consider this latency and to analyze the stability of time-delay systems~\cite{kolmanovskii1986stability}. First of all, the D-partition~\cite{McKay1970} issued from the modulus-argument calculation is simple to implement and indicates the exact stability properties. Furthermore, stability areas can be inferred using quasi-polynomials approaches and the set-up of Mikhaïov diagrams~\cite{Mondie2005}. Then, approximated models derived from pseudo-spectral techniques such as collocation~\cite{Breda2005}, or \textit{tau}~\cite{Ito1991} methods have also been prevalent. Besides stability sets obtained using bifurcation analysis, the root locus is outlined. In the Laplace domain, frequency-sweeping delay-dependent tests have also been developed to avoid case-by-case studies. The $\mathcal{H}_{\infty}$ analysis provides accurate stability results~\cite{knospe2006} and even results in the design of controllers~\cite{Fridman2005,Wu2010}. Lastly, in the time domain, it is well-known that the existence of Lyapunov-Krasovskii functionals leads to a necessary and sufficient condition of stability~\cite{Kharitonov2013} even though the sufficiency is usually not numerically tractable. The implementation has only been recently made feasible by discretizing the Lyapunov-Krasovskii functional~\cite{Egorov2017}. Henceforth, tractable necessary and sufficient conditions can be formulated as a positive definiteness test of a certain matrix. This method has been applied to various classes of delay systems with single~\cite{Mondie2013}, integral~\cite{Campos2018}, neutral~\cite{Mondie2016} or multiple~\cite{Zhabko2021,Mondie2018c} delay types. This paper focuses on this last feature.\\

For linear finite-dimensional systems with state matrix~$A$, stability is equivalent to the positive definiteness of a symmetric matrix $P$ solution of the so-called Lyapunov equation $PA+A^\top P=-I$. Concurrently, for linear infinite-dimensional systems with operator $\mathcal{A}$, stability is equivalent  to find a positive hermitian operator $\mathcal{P}$ solution of the Lyapunov equation $\mathcal{P}\mathcal{A}+\mathcal{A}^\ast\mathcal{P}=-I$ (see~\cite{Datko1970}). Certified implementation techniques need to be developed~\cite{Mondie2013,Zhabko2015} to use such a theoretical necessary and sufficient condition. From one side, the necessity is directly obtained by the construction of an approximated Lyapunov-Krasovskii functional~\cite{Mondie2014}. On the other side, the sufficiency is obtained asymptotically for sufficiently large approximated orders~\cite{Mondie2017}. In practice, the approximation is realized by discretizing the Lyapunov matrix appearing in the operator $\mathcal{P}$. The interpolated functions are selected on each evenly-spaced subinterval as polynomials (see piece-wise linear or splines schema~\cite{Gu2013,Zhabko2015,Zhabko2019}) or exponential kernels (see~\cite{Mondie2013,Mondie2014,Mondie2017}). The latter technique makes it possible to elegantly end up with point-wise evaluations of the Lyapunov matrix~$U$. Then, a necessary and sufficient condition of stability is expressed as the positive definiteness of a matrix, approximating $\mathcal{P}$, of size $n^\ast$~\cite{Mondie2019}. The estimation of the order $n^\ast$ to assess stability has also been given in~\cite{Mondie2021}. Nevertheless, this estimated order seems extremely large, pessimistic, and limited by the discretization schema, which leads us to the following questions. Is it possible to extend the methodology to other approximation techniques and to other support basis? Can the numerical complexity of the numerical test be reduced? In that direction, we propose here another way to approximate the Lyapunov-Krasovskii functional following the idea of projection on a Legendre polynomial basis~\cite{Seuret2015Hierarchy}. The selection of Legendre polynomials is already meaningful insofar \textit{tau}-Legendre models are very efficient to perform convergent simulations~\cite{Mokhtary2011} or convergent stability estimates in the linear matrix inequality framework~\cite{Bajodek2023}. By taking the benefits of Legendre approximation, especially its supergeometric convergence rate, new necessary and sufficient criterion of stability is derived and the estimated order $n^\ast$ is notably reduced compared to~\cite{Mondie2021}.\\

The article is organized as follows. Section~\ref{sec0} presents the complete Lyapunov-Krasovskii functional and recalls the necessity and sufficiency of the converse Lyapunov theorem. Section~\ref{sec1} is dedicated to the supergeometric convergence occurring when performing Legendre approximations. Then, our novel necessary and sufficient numerical condition of stability is exposed in Section~\ref{sec2}. The last section deals with computational issues and performances evaluations of our stability test.\\

\emph{Notations:} Throughout the paper, $\mathbb{N}$ and $\mathbb{R}^{m\times p}$ and $\mathbb{S}^m$ denote the set of natural numbers, real matrices of size $m\times p$ and symmetric matrices of size $m$, respectively. For any square matrix $M\in\mathbb{R}^{m\times m}$, $M^\top$ denotes the transpose of $M$ and $\mathcal{H}(M)$ stands for $M+M^\top$. For any matrix $M\in\mathbb{S}^{m}$, $M\succ 0$ means that $M$ is positive definite (i.e. the eigenvalues of $M$ are strictly positive). Furthermore, for any matrix $M$ in $\mathbb{R}^{m\times p}$, the 2-norm of $M$ is $\norm{M}=\sqrt{\sigmaU{M^\top M}}$, where $\bar{\sigma}$ defines the maximal eigenvalue. The vector $u=\mathrm{vec}(M)$ in $\mathbb{R}^{mp\times 1}$ collocates the columns of $M$ and the inverse operation is denoted $\mathrm{vec}^{-1}$ and verifies $\mathrm{vec}^{-1}(\mathrm{vec}(M))=M$. Moreover, $I_m$ is the identity matrix of size $m$, $\delta_{jk}$ denotes the Kronecker delta, symbol $\otimes$ represents the Kronecker product, matrix $\begin{bsmallmatrix}M_{1}&M_{2}\\\ast&M_{3}\end{bsmallmatrix}$ stands for $\begin{bsmallmatrix}M_{1}&M_{2}\\M_2^\top&M_{3}\end{bsmallmatrix}$ and $\mathrm{diag}(d_1,\dots,d_n)$ is the diagonal matrix with diagonal coefficients $d_1,\dots,d_n$. We also declare functions $\e^r$ and $\lceil r \rceil$ as the exponential and ceiling part of the real number $r$, respectively.
For functions $f_1, f_2$ from $\mathbb{N}$ to $\mathbb{R}$, equivalence $f_1(n)\sim f_2(n)$ means that $\frac{f_1}{f_2}(n)$ is finite as $n$ tends to infinity.
The set of piece-wise continuous functions from~$[-h,0]$ to~$\mathbb{R}^m$ is denoted $\mathcal{C}_{pw}(-h,0;\mathbb{R}^m)$. For any function $\varphi$ in this set, the induced norm is $\normC{\varphi} = \underset{[-h,0]}{\mathrm{sup}}\norm{\varphi(\tau)}$.
Denote also $\mathcal{C}_{\infty}(-h,0;\mathbb{R}^m)$, the set of smooth functions from~$[-h,0]$ to~$\mathbb{R}^m$. Finally, the Shimanov notation $x_t:\left\{ \begin{array}{l} [-h,0] \to\mathbb{R}^m\\ \tau \mapsto x_t(\tau)=x(t+\tau)\end{array}\right.$ will be used all along the paper.

\section{Lyapunov necessary and sufficient stability condition for time-delay systems}\label{sec0}

\subsection{Time-delay system and Lyapunov-Krasovskii functional}

Consider a linear time invariant time-delay system given by
\begin{equation}\label{eq:tds}
    \dot{x}(t) = A x(t) + A_d x(t-h),\quad \forall t\geq 0,
\end{equation}
where $h>0$ is the delay and matrices $A,A_d$ in $\mathbb{R}^{m\times m}$ are constant and known. Such a system is initialized by $x_0=\varphi$ in $\mathcal{C}_{pw}(-h,0;\mathbb{R}^m)$ and, for any $t\geq 0$, $x_t$ in $\mathcal{C}_{pw}(-h,0;\mathbb{R}^m)$ denotes the state of~\eqref{eq:tds}. 

\begin{definition}
The trivial solution of system~\eqref{eq:tds} is said to be exponentially stable if there exist $\kappa\geq 1$ and  $\alpha>0$ such that, for all $t\geq 0$ and $x_0\in\mathcal{C}_{pw}(-h,0;\mathbb{R}^m)$, $\normC{x_t}\leq \kappa \e^{-\alpha t}\normC{x_0}$ holds.
\end{definition}
In order to study the stability of system~\eqref{eq:tds}, recall the Lyapunov-Krasovskii functional introduced in~\cite{Kharitonov2013}:
\begin{equation}\label{eq:V}
\begin{array}{lcl}
    V(\varphi) \!&=&\! \displaystyle \int_{-h}^0\! \int_{-h}^0 \!\begin{bsmallmatrix}\varphi(0)\\\!\varphi(\tau_1)\!\\\!\varphi(\tau_2)\!\end{bsmallmatrix}^{\!\top}
    \!\Pi(\tau_1,\tau_2)
    \begin{bsmallmatrix}\varphi(0)\\\!\varphi(\tau_1)\!\\\!\varphi(\tau_2)\!\end{bsmallmatrix} \dtau_1\dtau_2,
\end{array}
\end{equation}
for any $\varphi\in\mathcal{C}_{pw}(-h,0;\mathbb{R}^m)$, where matrix $\Pi$ is given by
\begin{equation}
    \Pi(\tau_1,\tau_2) \!=\!
    \begin{bmatrix}\frac{U(0)}{h^2}&\frac{1}{2h}U^\top\!(h+\tau_1)A_d&\frac{1}{2h}U^\top\!(h+\tau_2)A_d\\
    \ast & \frac{1}{2h}I_m & \frac{1}{2}A_d^{\top}U(\tau_1-\tau_2)A_d\\
    \ast & \ast & \frac{1}{2h}I_m \end{bmatrix}\!,
\end{equation}
and where the Lyapunov matrix $U$ in $\mathbb{R}^{m\times m}$ is given by $U = \mathrm{vec}^{-1}(\mathcal{U})$ where $\mathcal{U}=\mathrm{vec}(U)$ is given analytically by
\begin{equation}\label{eq:U}
\mathcal{U}(\tau) =
\left\{\begin{array}{lcl}
    \begin{bsmallmatrix}I_{m^2}&0\end{bsmallmatrix}\e^{\tau\M}\N^{-1}\begin{bsmallmatrix}-\mathrm{vec}(I_m)\\0\end{bsmallmatrix} & \text{if} & \tau \geq 0,\\
    \begin{bsmallmatrix}0&I_{m^2}\end{bsmallmatrix}\e^{(h+\tau)\M}\N^{-1}\begin{bsmallmatrix}-\mathrm{vec}(I_m)\\0\end{bsmallmatrix} & \text{if} & \tau < 0,
\end{array}\right.
\end{equation}
with
\begin{equation}\label{eq:MN}
\begin{aligned} 
    \M &= 
    \begin{bsmallmatrix}
        A^\top \otimes I_{m} & A_d^\top \otimes I_{m}\\
        - I_{m} \otimes A_d^\top & - I_{m} \otimes A^\top
    \end{bsmallmatrix},\\
    \N &= 
    \begin{bsmallmatrix}
        A^\top\otimes I_{m} + I_{m}\otimes A^\top & A_d^\top \otimes I_{m}\\
        I_{m^2} & 0
    \end{bsmallmatrix} \!+\!
    \begin{bsmallmatrix}
        I_{m} \otimes A_d^\top& 0\\
        0 & -I_{m^2}
    \end{bsmallmatrix} \e^{h\M}.
\end{aligned}
\end{equation}

The authors of~\cite{Mondie2017} showed that it is the unique functional that satisfies
\begin{equation}\label{eq:Vdot}
    \dot{V}(x_t) = -\norm{x(t-h)}^2,
\end{equation}
along the trajectories $x_t$ of system~\eqref{eq:tds}.\\

\begin{remark}\label{rem:Lyapcond}
The Lyapunov matrix $U$ ensuring~\eqref{eq:Vdot} is unique if and only if matrix~$\N$ is non singular. As explained in~\cite{Kharitonov2013}, such a limitation is the Lyapunov condition and excludes all systems with eigenvalues $s_1,s_2$ satisfying $\norm{s_1+s_2}= 0$.
\end{remark}

\subsection{Necessary and sufficient stability condition}

Under the Lyapunov condition, the authors of~\cite{Mondie2017,Mondie2021,Kharitonov2013,Zhabko2019} provide sufficient and necessary conditions for the exponential stability of system~\eqref{eq:tds}, which are recalled below.

\begin{lemma}\label{lem:CN}
If system~\eqref{eq:tds} is exponentially stable, then there exists $\eta>0$ such that, 
\begin{equation}
    V(\varphi)\! \geq\! \eta \!\left(\!\!\norm{\varphi(0)}^2 \!+\! \frac{1}{h}\!\int_{-h}^0\!\!\norm{\varphi(\tau)}^2\dtau\!\!\right)\!,\forall\varphi\in\mathcal{C}_{pw}(-h,0;\mathbb{R}^{m}),\label{eq:CN}
\end{equation}
where $V$ is the functional defined by~\eqref{eq:V} satisfying~\eqref{eq:Vdot}.
\end{lemma}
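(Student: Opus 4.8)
The plan is to turn the derivative identity~\eqref{eq:Vdot}, namely $\dot V(x_t)=-\norm{x(t-h)}^2$, into an explicit integral representation of $V(\varphi)$ along trajectories, and then to read off the two lower-bound terms from this representation. First I would integrate~\eqref{eq:Vdot} from $0$ to $T$ along the trajectory issued from $x_0=\varphi$, obtaining $V(x_T)-V(\varphi)=-\int_0^T\norm{x(t-h)}^2\,dt$. Since $\Pi$ is assembled from the Lyapunov matrix $U$, which is continuous and therefore bounded on $[-h,h]$, the functional satisfies a bound $\norm{V(\psi)}\le M\normC{\psi}^2$ for some $M>0$; exponential stability then gives $\norm{V(x_T)}\le M\kappa^2\e^{-2\alpha T}\normC{\varphi}^2\to 0$ as $T\to\infty$. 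Letting $T\to\infty$ and substituting $s=t-h$ yields the key identity
\[
V(\varphi)=\int_{-h}^{\infty}\norm{x(s)}^2\,ds=\int_{-h}^{0}\norm{\varphi(s)}^2\,ds+\int_{0}^{\infty}\norm{x(s)}^2\,ds .
\]
The first summand already supplies the integral term $\frac1h\int_{-h}^0\norm{\varphi(\tau)}^2\dtau$ up to the factor $h$, and the second summand is nonnegative, so $V(\varphi)\ge\int_{-h}^0\norm{\varphi(s)}^2\,ds$.

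The remaining and principal difficulty is to recover the point-wise term $\norm{\varphi(0)}^2$ from $\int_0^\infty\norm{x(s)}^2\,ds$. A direct compactness/normalisation argument is not available here because the unit sphere of $\mathcal{C}_{pw}(-h,0;\mathbb{R}^m)$ is not compact, so I would rely instead on a short-time estimate. On $[0,h]$ the delayed argument falls into the initial data, $x(s-h)=\varphi(s-h)$, so the variation-of-constants formula gives $x(s)=\e^{As}\varphi(0)+\int_0^s\e^{A(s-\theta)}A_d\varphi(\theta-h)\,d\theta$. By Cauchy--Schwarz the integral remainder is bounded by $C\sqrt{s}\,\big(\int_{-h}^0\norm{\varphi(u)}^2\,du\big)^{1/2}$, with $C$ depending only on $A$, $A_d$ and $h$. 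Using the elementary inequality $\norm{u+v}^2\ge\tfrac12\norm{u}^2-\norm{v}^2$ together with $\norm{\e^{As}\varphi(0)}^2\ge\tfrac12\norm{\varphi(0)}^2$ for $s$ small, I would reach, for a sufficiently small $\delta\le h$,
\[
\int_{0}^{\delta}\norm{x(s)}^2\,ds\ \ge\ \frac{\delta}{4}\norm{\varphi(0)}^2-\frac{C^2\delta^2}{2}\int_{-h}^{0}\norm{\varphi(s)}^2\,ds .
\]

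Finally I would combine the two estimates. Choosing $\delta$ small enough that $\frac{C^2\delta^2}{2}\le\frac12$, the negative remainder is absorbed by one half of the integral term already secured in the first paragraph, giving $V(\varphi)\ge\frac{\delta}{4}\norm{\varphi(0)}^2+\frac12\int_{-h}^0\norm{\varphi(s)}^2\,ds$. Setting $\eta=\min\{\frac{\delta}{4},\frac{h}{2}\}$ then reproduces exactly the claimed bound~\eqref{eq:CN}. The two places demanding care are the justification that $V(x_T)\to0$, which hinges on the boundedness of $\Pi$ inherited from the continuity of $U$, and the bookkeeping in the short-time estimate, where the cross term generated by the forcing $A_d\varphi(\cdot-h)$ must be dominated by the available integral term rather than by the point term $\norm{\varphi(0)}^2$ it is meant to produce.
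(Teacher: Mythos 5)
Your proof is correct, but it takes a genuinely different route from the paper's. The paper perturbs the functional: it sets $W(\varphi)=V(\varphi)-\eta_0\norm{\varphi(0)}^2-\frac{1}{2}\int_{-h}^0\norm{\varphi(\tau)}^2\dtau$, computes $\dot W(x_t)$ along trajectories as a quadratic form in $(x(t),x(t-h))$, notes that this form is negative semidefinite for $\eta_0$ small enough (a Schur-complement condition on $A$, $A_d$), and integrates $\dot W\le 0$ over $[0,\infty)$, using exponential stability to make $W(x_T)$ vanish, which gives $W(\varphi)\ge 0$. You instead integrate~\eqref{eq:Vdot} directly to obtain the exact representation $V(\varphi)=\int_{-h}^{\infty}\norm{x(s)}^2\,\mathrm{d}s$, read off the distributed term from the portion over $[-h,0]$, and recover the point term $\norm{\varphi(0)}^2$ from a short-time variation-of-constants estimate on $[0,\delta]$, absorbing the cross term generated by $A_d\varphi(\cdot-h)$ into half of the already-secured integral term. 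Both arguments rest on the same two ingredients, namely identity~\eqref{eq:Vdot} and the decay of the functional along exponentially stable trajectories (your bound $\norm{V(\psi)}\le M\normC{\psi}^2$, inherited from the continuity of $U$ on $[-h,h]$, is exactly what is needed there), and both close with $\eta=\min(\cdot,\tfrac{h}{2})$. The paper's version is shorter and purely algebraic once $\dot W$ is written out; yours is more explicit about where each piece of the lower bound originates and produces constants $\delta$ and $C$ with a direct trajectory interpretation, at the price of the Cauchy--Schwarz bookkeeping you correctly flag. Your individual steps all check out: the inequality $\norm{u+v}^2\ge\tfrac12\norm{u}^2-\norm{v}^2$, the choice of $\delta$ so that $\tfrac{C^2\delta^2}{2}\le\tfrac12$, and discarding $\int_{\delta}^{\infty}\norm{x(s)}^2\,\mathrm{d}s\ge 0$ are all sound.
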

\begin{lemma}\label{lem:CS}
Assume that system~\eqref{eq:tds} has an eigenvalue with a strictly positive real part. Then,
\begin{equation}
    \exists\; \varphi\in\mathcal{S},\quad V(\varphi) \leq -\eta_0 = -\frac{\e^{-2rh}}{4r}\mathrm{cos}^2(b_0)<0,\label{eq:CS}
\end{equation}
where the functional~$V$ is defined by~\eqref{eq:V} and~\eqref{eq:Vdot} and where $\mathcal{S}$ stands for the compact set given by
\begin{equation}\label{eq:S}
    \mathcal{S} = \left\{ \varphi\in\mathcal{C}_{\infty}(-h,0;\mathbb{R}^{m});\;
    \begin{matrix}\norm{\varphi(0)} = 1\\ \normC{\varphi^{(k)}}\leq r^{k},\; \forall k\in\mathbb{N}\end{matrix}
    \right\},
\end{equation}
with a system dependent parameter $r$ given by
\begin{equation}\label{eq:r}
    r = \norm{A}+\norm{A_d},
\end{equation}
and with scalar $b_0$ the unique root on $[0,\frac{\pi}{2}]$ of the function $g(b):= \sin^4(b)\big((hr)^2+b^2\big)-(hr)^2$.
\end{lemma}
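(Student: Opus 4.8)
The plan is to convert the instability into an explicit destabilizing function of $\mathcal{S}$ and to read off $\eta_0$ from the dissipation identity~\eqref{eq:Vdot}. Since~\eqref{eq:tds} has a characteristic root $s_0=a+\mathrm{i}\omega$ with $a>0$, pick $v\in\mathbb{C}^m$ with $(A+A_d\e^{-s_0h})v=s_0v$ and set $x(t)=\mathrm{Re}(v\e^{s_0t})$; this is a genuine real solution of~\eqref{eq:tds} on all of $\mathbb{R}$ which decays to $0$ as $t\to-\infty$. Taking moduli in the eigen-relation and using $\norm{\e^{-s_0h}}=\e^{-ah}\le1$ already gives $\norm{s_0}\le\norm{A}+\norm{A_d}=r$.

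First I would build the element of $\mathcal{S}$. As the envelope $\e^{at}$ is increasing, $\norm{x(\cdot)}$ attains its supremum over some half-line $(-\infty,t_0]$ at a finite record point $t_0$; after the shift and rescaling $\tau\mapsto x(t_0+\tau)/\norm{x(t_0)}$, which is again a solution of the same exponential type, I may assume $\sup_{\theta\le0}\norm{x(\theta)}=\norm{x(0)}=1$. The signal is smooth, and differentiating~\eqref{eq:tds} gives $x^{(k)}(t)=Ax^{(k-1)}(t)+A_dx^{(k-1)}(t-h)$, so the quantities $M_k=\sup_{\theta\le0}\norm{x^{(k)}(\theta)}$ obey $M_k\le(\norm{A}+\norm{A_d})M_{k-1}=rM_{k-1}$. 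With $M_0=1$ this yields $M_k\le r^k$, whence $\varphi:=x|_{[-h,0]}$ satisfies $\normC{\varphi^{(k)}}\le r^k$ and $\norm{\varphi(0)}=1$, i.e. $\varphi\in\mathcal{S}$.

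Next I would evaluate $V(\varphi)$. Integrating~\eqref{eq:Vdot} along $x$ over $(-\infty,0]$ and using $\normC{x_{-T}}\to0$ together with the continuity of $V$ at $0$ (indeed $\norm{V(\cdot)}\le C\normC{\cdot}^2$, since $\Pi$ is bounded), I obtain the closed expression $V(\varphi)=-\int_{-\infty}^{0}\norm{x(t-h)}^2\,\mathrm{d}t=-\int_{-\infty}^{-h}\norm{x(\theta)}^2\,\mathrm{d}\theta<0$. Strict negativity is immediate; the whole statement now rests on bounding this tail integral \emph{from below}, uniformly over every unstable system sharing the value $r$.

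This uniform lower bound is the crux and the step I expect to resist. For the mode $x$ one has $\norm{x(\theta)}^2=\e^{2a\theta}\big(p_0+p_1\cos2\omega\theta+p_2\sin2\omega\theta\big)$, so the integral is available in closed form as $\tfrac{\e^{-2ah}}{4}$ times an explicit trigonometric-rational function of $(a,\omega,h)$, which must then be minimized over all admissible modes. The delicate point is that the admissible set is \emph{not} merely $\{\norm{s_0}\le r\}$: a naive relaxation to that box lets the eigenvector phase be chosen so freely that the infimum drops strictly below $\eta_0$. One must instead retain the full eigen-relation, whose imaginary part reads $\omega\,v_r=(A-aI_m)v_i+\mathrm{Im}\!\big(A_d\e^{-s_0h}v\big)$ with $v=v_r+\mathrm{i}v_i$ and $v_r=x(0)$; this ties the frequency $\omega$ and the imaginary part $v_i$ to $A,A_d$, hence to $r$, and it is precisely this coupling that keeps the integral bounded away from $0$. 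Carrying out the resulting constrained minimization — with $\norm{s_0}=r$ saturated and the phase pinned down by the eigen-relation, and setting $b_0=\omega h$ — should condense the first-order conditions into the scalar transcendental equation $g(b_0)=\sin^4(b_0)\big((hr)^2+b_0^2\big)-(hr)^2=0$, the optimal value being exactly $\eta_0=\tfrac{\e^{-2rh}}{4r}\cos^2(b_0)$. Proving that this configuration is the \emph{global} minimizer, and that the realizability constraints have been encoded tightly enough to rule out the spurious lower configurations, is where the real difficulty lies.
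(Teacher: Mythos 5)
Your construction of the destabilizing element and the integral representation of $V$ match the paper's route: you take the real eigenmode associated with $s_0=\alpha+i\beta$, show $\norm{s_0}\le r$, verify membership in $\mathcal{S}$ by differentiating the delay equation inductively, and obtain $V(\varphi)=-\int_{-\infty}^{-h}\norm{x(\theta)}^2\mathrm{d}\theta$ by integrating~\eqref{eq:Vdot} over $(-\infty,0]$. All of that is sound (your ``record point'' normalization is a legitimate variant of the paper's normalization for getting $\norm{\varphi(0)}=1$ and $\normC{\varphi^{(k)}}\le r^k$).

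However, the quantitative core of the lemma --- that the tail integral is at least $\eta_0=\frac{\e^{-2rh}}{4r}\cos^2(b_0)$ with $b_0$ the root of $g$ --- is exactly the step you leave open, and your setup does not contain the tool needed to close it. The missing idea is the eigenvector normalization used in the paper (borrowed from Gomez--Egorov--Mondi\'e): one can choose $C=C_1+iC_2$ with $\norm{C_2}\le\norm{C_1}=1$ and $C_2^\top C_1=0$, so that $\norm{\bar{x}(\theta)}^2=\e^{2\alpha\theta}\bigl(\cos^2(\beta\theta)+\norm{C_2}^2\sin^2(\beta\theta)\bigr)\ge\e^{2\alpha\theta}\cos^2(\beta\theta)$ pointwise. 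This single inequality eliminates the eigenvector phase entirely and reduces the problem to minimizing $\int_{-\infty}^{-h}\e^{2\alpha\theta}\cos^2(\beta\theta)\mathrm{d}\theta$ over $0<\alpha$, $\alpha^2+\beta^2\le r^2$, a one-dimensional calculus exercise whose stationarity condition is precisely $g(b_0)=0$ with $b=\beta h$. Your alternative --- retaining the full complex eigen-relation to ``pin down the phase'' and running a constrained minimization over all admissible $(A,A_d,v,s_0)$ --- is not what the paper does, and you correctly sense it would be very hard to make rigorous; in fact it is unnecessary once the orthogonal normalization of $C$ is available. As written, your argument establishes $V(\varphi)<0$ but not the explicit, system-independent-in-form bound $V(\varphi)\le-\eta_0$ that Theorem~\ref{thm:CNS} requires to fix the order $\mathcal{N}(\mathcal{E}(\eta_0))$, so the proof is incomplete at its decisive step.
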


\begin{proof}
The proofs of both lemmas are postponed to Appendices~\ref{app:CN} and~\ref{app:CS}.
\end{proof}

\section{Preliminaries on Legendre polynomials} \label{sec1}

\subsection{Legendre approximation}

Legendre polynomials considered on $[-h,0]$ are defined by
\begin{equation} \label{eq:lk} 
\begin{array}{c}
    \forall k\in\mathbb{N},\;
    l_k(\tau) = (-1)^k\overset{k}{\underset{j=0}{\sum}}(-1)^j(\begin{smallmatrix}k\\j\end{smallmatrix})(\begin{smallmatrix}k+j\\j\end{smallmatrix})\left(\frac{\tau+h}{h}\right)^j,
\end{array}
\end{equation}
where $(\begin{smallmatrix}k\\j\end{smallmatrix})$ stands for the binomial coefficient~\cite{Gautschi2006legendre}. These polynomials $\{l_k\}_{k\in\mathbb{N}}$ form an orthogonal sequence of functions, which spans the space of square-integrable functions~\cite{Gautschi2006legendre}. 

For the sake of simplicity, introduce matrix $\ell_n$ in $\mathbb{R}^{nm\times m}$ given by
\begin{equation}\label{eq:ln}
\ell_n(\theta)\!=\!\begin{bmatrix} l_0(\theta)I_m&l_1(\theta)I_m&\dots& l_{n-1}(\theta)I_m\end{bmatrix}^{\!\top}\!\!\!,\;\forall \theta\in[-h,0].
\end{equation}

For any function $\varphi$ in $\mathcal{C}_{pw}(-h,0;\mathbb{R}^m)$ and any approximation order $n\in\mathbb{N}^\ast$, let us decompose
\begin{equation}\label{eq:expansion}
\forall \tau\in [-h,0],\quad \varphi(\tau)= \underbrace{\ell_n^\top(\tau)\Phi_n}_{\varphi_n(\tau)} + \tilde{\varphi}_{n}(\tau),
\end{equation}
where $\varphi_n(\tau)=\ell_n^\top(\tau)\Phi_n$ is the polynomial approximation and $\tilde{\varphi}_{n}(\tau)=\varphi(\tau)-\varphi_n(\tau)$ is the residual error. The vector $\Phi_n$ represents the normalized $n$ first polynomial coefficients of the function $\varphi$ and is defined by
\begin{equation}\label{eq:phin}
\Phi_n = \underbrace{\mathrm{diag}\left(\frac{1}{h},\dots,\frac{2n\!-\!1}{h}\right)\otimes I_m}_{\mathbf{I}_n} \int_{-h}^0\!\! \ell_n(\tau)\varphi(\tau) \dtau \in \mathbb R^{nm},
\end{equation}
where $\mathbf{I}_n=\int_{-h}^0\ell_n(\tau)\ell_n^\top(\tau)\dtau$ is the diagonal Gram-Schmidt normalization matrix.

In the sequel, the objective is to prove that the Legendre approximation $\varphi_{n}$ converges uniformly towards $\varphi$ with respect to $\tau$ and also to quantify its convergence rate on $\mathcal{S}$.

\subsection{Convergence of the Legendre remainder}

In light of the polynomial approximation theory~\cite{Cheney1982}, it results in an important convergence lemma.

\begin{lemma}\label{lem:convf}
For any function $\varphi$ in $\mathcal{S}$, the approximation error~$\tilde{\varphi}_{n}$ in~\eqref{eq:expansion} verifies, for any $\eps>0$,
\begin{equation}\label{eq:convf}
    \normC{\tilde{\varphi}_{n}} \leq \eps, \quad \forall n\geq \mathcal{N}(\eps),
\end{equation}
where $\mathcal{N}(\eps)$ is given by
\begin{equation}\label{eq:neps}
    \mathcal{N}(\eps) = \mathrm{max}\left(4, \left\lceil \frac{3}{2} + \mu \e^{1+\mathcal{W}\left(-\frac{\log(\rho\eps)}{\mu\e}\right)}\right\rceil\right),
\end{equation}
where
\begin{equation}\label{eq:constants}
        \mu = \frac{hr}{2},\quad \rho = \sqrt{\frac{2\lceil\mu\rceil}{\pi^3}} \frac{1}{\mu^{2}} \left(\frac{\mu\e}{\lceil\mu\rceil+\frac{1}{2}}\right)^{\lceil\mu\rceil+\frac{1}{2}},
\end{equation}
and where the Lambert function~\cite{Corless1996} $$\mathcal{W}:\left\{\begin{aligned}\mathbb{R}_+&\to\mathbb{R}_+,\\z&\mapsto \mathcal{W}(z)=y,\end{aligned}\right.$$ where $y$ is uniquely defined by the relation $y\e^{y}=z$.
\end{lemma}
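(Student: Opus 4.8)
The plan is to identify the residual in~\eqref{eq:expansion} with the tail of the Legendre series of $\varphi$ and to control it by a summable bound on the coefficient norms. Writing the $k$-th normalized coefficient as $c_k=\frac{2k+1}{h}\int_{-h}^0 l_k(\tau)\varphi(\tau)\dtau\in\mathbb{R}^m$, the coefficient estimate below will show that the series converges uniformly to $\varphi$, so that $\tilde{\varphi}_n(\tau)=\sum_{k\geq n}c_k\,l_k(\tau)$. Since every shifted Legendre polynomial obeys $\sup_{[-h,0]}\norm{l_k(\tau)}=1$, the triangle inequality gives $\normC{\tilde{\varphi}_{n}}\leq\sum_{k\geq n}\norm{c_k}$, reducing the whole statement to a geometric-type bound on $\norm{c_k}$.

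To estimate $\norm{c_k}$ I would rescale $[-h,0]$ onto $[-1,1]$, turning $c_k$ into the standard $k$-th Legendre coefficient $\frac{2k+1}{2}\int_{-1}^1 P_k(s)\psi(s)\,\mathrm{d}s$ of $\psi(s)=\varphi(\tfrac{h}{2}(s-1))$. Rodrigues' formula $P_k=\frac{1}{2^kk!}\frac{\mathrm{d}^k}{\mathrm{d}s^k}(s^2-1)^k$ together with $k$ integrations by parts (all boundary terms vanish because $(s^2-1)^k$ has a zero of order $k$ at $\pm1$) transfers the derivatives onto $\psi$, yielding $c_k=\frac{2k+1}{2^{k+1}k!}\int_{-1}^1\psi^{(k)}(s)(1-s^2)^k\,\mathrm{d}s$. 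On $\mathcal{S}$ one has $\norm{\psi^{(k)}}\leq(\tfrac{h}{2})^k\normC{\varphi^{(k)}}\leq(\tfrac{hr}{2})^k=\mu^k$, and the Wallis integral $\int_{-1}^1(1-s^2)^k\,\mathrm{d}s=\frac{2^{2k+1}(k!)^2}{(2k+1)!}$ collapses the estimate to $\norm{c_k}\leq b_k:=\frac{\mu^k}{(2k-1)!!}$.

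Next I would sum the tail. The ratio $b_{k+1}/b_k=\frac{\mu}{2k+1}$ is decreasing and strictly below $1$ as soon as $k\geq\lceil\mu\rceil$, so for such $n$ the tail is dominated by a geometric series, $\sum_{k\geq n}b_k\leq b_n/(1-\tfrac{\mu}{2n+1})$, i.e. by a fixed multiple of $b_n$. It then remains to put $b_n=\mu^n/(2n-1)!!$ into supergeometric form: writing $(2n-1)!!=2^n\Gamma(n+\tfrac12)/\sqrt{\pi}$ and inserting a Stirling lower bound on $\Gamma$ produces an estimate of the type $\normC{\tilde{\varphi}_{n}}\leq\frac{1}{\rho}\left(\frac{\mu\e}{\,n-\frac{3}{2}\,}\right)^{n-\frac{3}{2}}$, where the prefactor $\rho$ of~\eqref{eq:constants} absorbs the geometric summation constant together with the Stirling constant evaluated near the split index $k=\lceil\mu\rceil$; this is precisely the origin of the $\lceil\mu\rceil$-dependence and of the factor $(\mu\e/(\lceil\mu\rceil+\tfrac12))^{\lceil\mu\rceil+\frac12}$.

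Finally I would invert this supergeometric bound. With $p=n-\tfrac32$, the requirement $\normC{\tilde{\varphi}_{n}}\leq\eps$ becomes $\left(\frac{\mu\e}{p}\right)^p\leq\rho\eps$; taking logarithms and substituting $t=p/(\mu\e)$ recasts it as $t\log t\geq-\frac{\log(\rho\eps)}{\mu\e}$, whose boundary case is solved explicitly by $t=\e^{\mathcal{W}(-\log(\rho\eps)/(\mu\e))}$, hence $p=\mu\,\e^{1+\mathcal{W}(-\log(\rho\eps)/(\mu\e))}$, which is exactly the threshold $\mathcal{N}(\eps)$ of~\eqref{eq:neps}. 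I expect the main obstacle to be the branch selection: the map $p\mapsto(\mu\e/p)^p$ increases on $(0,\mu)$ and decreases on $(\mu,\infty)$, so only the decreasing branch yields a valid sufficient threshold. This is exactly why the principal branch $\mathcal{W}:\mathbb{R}_+\to\mathbb{R}_+$ is the right choice, why the condition $\rho\eps\leq1$ is needed for $\mathcal{W}$ to be real, and why the outer $\mathrm{max}(4,\cdot)$ and the ceiling $\lceil\cdot\rceil$ are inserted to enforce both $n>\mu$ (so that the geometric domination of the tail is licit) and integrality of the order.
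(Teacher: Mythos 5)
Your proposal is essentially correct but takes a genuinely different route. The paper's proof does not derive the error bound at all: it invokes \cite[Th.~2.5]{Wang2012} to obtain the ready-made estimate~\eqref{eq:borne1}, and then devotes all of its effort to the inversion (logarithm, integral comparison for $\sum\log(k+\tfrac12)$, Lambert-$\mathcal{W}$ substitution) — which is exactly your final step. You instead rederive the decay from scratch: Rodrigues' formula, $k$ integrations by parts, the Wallis integral, giving $\norm{c_k}\leq\mu^k/(2k-1)!!$, followed by geometric summation of the tail. This is self-contained (no external reference needed) and actually yields a \emph{tighter} bound, of order $\left(\mu\e/(2n)\right)^{n}$ rather than the $\left(\mu\e/(n-\tfrac32)\right)^{n-\frac32}$ of~\eqref{eq:borne1}, because you keep the full double factorial instead of only the partial product $\prod_{k=\lceil\mu\rceil+1}^{n-2}(k+\tfrac12)$. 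Your observation about branch selection ($n>\mu+\tfrac32$ so the bound sits on its decreasing branch, and $\rho\eps\leq 1$ so that $\mathcal{W}$ is defined on $\mathbb{R}_+$) is correct and is in fact left implicit in the paper.

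The one loose end is the assertion that your estimate ``collapses'' to $\frac{1}{\rho}\left(\mu\e/(n-\tfrac32)\right)^{n-\frac32}$ with the \emph{specific} $\rho$ of~\eqref{eq:constants}. That $\rho$ is an artifact of the paper's particular chain — the prefactor $\sqrt{\pi^3/(2\lceil\mu\rceil)}$ inherited from~\eqref{eq:borne1} plus the boundary term at $x=\lceil\mu\rceil$ of the integral comparison — and your constants will not literally coincide with it. Since the lemma claims the threshold $\mathcal{N}(\eps)$ with that exact $\rho$, you must close the gap either by carrying your own constants through the Lambert-$\mathcal{W}$ inversion and checking that the resulting threshold is dominated by~\eqref{eq:neps}, or by verifying directly that your tail bound is no larger than $\frac{1}{\rho}\left(\mu\e/(n-\tfrac32)\right)^{n-\frac32}$ for every $n\geq\max(4,\lceil\mu\rceil)$; this is plausible (your bound is asymptotically far smaller) but not automatic for small $n$, and as written it is asserted rather than proven.
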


\begin{proof}
According to~\cite[Th.~~2.5]{Wang2012}, an upper bound of the Legendre approximation error~$\normC{\tilde{\varphi}_n}=\underset{[-h,0]}{\mathrm{sup}}\norm{\tilde{\varphi}_{n}(\tau)}$ is given by
\begin{equation}
    \normC{\tilde{\varphi}_n} \leq \sqrt{\frac{\pi^3}{2\lceil\mu\rceil}}
    \frac{\mu^{n-\lceil\mu\rceil}}{(n-\frac{3}{2})\dots(1+\lceil\mu\rceil+\frac{1}{2})},\quad\forall n\geq 4,\label{eq:borne1}
\end{equation}
using Legendre polynomials properties and $n-1-\lceil\mu\rceil$ successive integrations by parts. Applying the logarithm to~\eqref{eq:borne1} leads to
\begin{equation}\label{eq:ineq0}
\begin{aligned}
    \log\left(\sqrt{\frac{2\lceil\mu\rceil}{\pi^3}}\normC{\tilde{\varphi}_{n}}\right) \leq&\; \left(n-\left\lceil\mu\right\rceil\right)\log\left(\mu\right)\\
    &\; - \sum_{1+\lceil\mu\rceil}^{n-2} \log\left(k+\frac{1}{2}\right).
\end{aligned}
\end{equation}
Since the $\log$ function is monotonically increasing, we obtain
\begin{equation*}
\begin{aligned}
    \sum_{k=1+\lceil\mu\rceil}^{n-2}\!\!\log\!\left(\!k+\frac{1}{2}\!\right) &\geq \!\int_{x=\lceil\mu\rceil}^{n-2}\!\! \log\!\left(\!x+\frac{1}{2}\!\right)\mathrm{d}x,\\
    &= \left[\left(x+\frac{1}{2}\right)\log\left(\frac{x+\frac{1}{2}}{\e}\right)\right]_{\lceil\mu\rceil}^{n-2},
\end{aligned}
\end{equation*}
where $\e$ denotes the exponential of $1$. Reordering the terms and introducing $\rho$ in~\eqref{eq:constants}, inequality~\eqref{eq:ineq0} becomes
\begin{equation}\label{eq:ineq1}
     \log\left(\rho\normC{\tilde{\varphi}_{n}}\right) \leq - \left(n-\frac{3}{2}\right)\log\left(\frac{n-\frac{3}{2}}{\mu\e}\right).
\end{equation}
Denoting $y_n:=\log\left(\frac{n-\frac{3}{2}}{\mu\e}\right)$, we look for the orders $n$ such that the upper bound given by~\eqref{eq:ineq1} is bounded by $\log(\lambda\eps)$. Then, the following inequality need to be satisfied
\begin{equation}
    -\,y_n\e^{y_n} \leq \frac{\log(\rho\eps)}{\mu\e}.
\end{equation}
From Lambert function definition~\cite{Corless1996}, it boils down to
\begin{equation}
    y_n:=\log\left(\frac{n-\frac{3}{2}}{\mu\e}\right) \geq \mathcal{W}\left(-\frac{\log(\rho\eps)}{\mu\e}\right).
\end{equation}
Therefore, the orders for which the previous inequality holds satisfy
\begin{equation}
    n \geq \frac{3}{2} +  \mu\e^{1+\mathcal{W}\left(-\frac{\log(\rho\eps)}{\mu\e}\right)}.
\end{equation}
Together with the initial constraint $n\geq 4$ to employ~\eqref{eq:borne1}, the expression of $\mathcal{N}(\eps)$ is retrieved, which concludes the proof.
\end{proof}

This result allows us to estimate an order that ensures that $\normC{\tilde{\varphi}_n}$ is upper bounded by $\eps>0$, for any $\varphi$ in $\mathcal{S}$. The relation between $\eps$ and such a minimal order $\mathcal{N}(\eps)$ is depicted in Fig.~\ref{fig:conv}. As expected for smooth functions~\cite{boyd2001,Wang2012}, the uniform convergence of Legendre approximation is supergeometric which means that $\eps=\mathcal{N}^{-1}(n)\sim\e^{-n\log(n)}$ as emphasized in formula~\eqref{eq:borne1}.

\begin{figure}[!t]
    \centering
    \includegraphics[width=7cm]{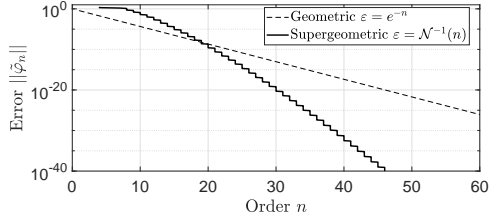}
    \caption{Convergence rate of $||\tilde{\varphi}_n||$ by Legendre approximation for functions $\varphi$ in $\mathcal{S}$ with $\mu=2$.}
    \label{fig:conv}
\end{figure}

\section{A new necessary and sufficient stability condition for time-delay systems}\label{sec2}

\subsection{Approximated Lyapunov-Krasovskii functional}

In this section, in order to construct an approximated Lyapunov-Krasovskii functional, the complete Lyapunov-Krasovskii functional given by~\eqref{eq:V} is regarded for particular functions $\varphi$, taken from subsets of $\mathcal{C}_{pw}(-h,0;\mathbb{R}^m)$. For instance, we consider here the space spanned by the $n$ first Legendre polynomial, and we take support on the $n$ first Legendre coefficients of $\varphi$ denoted $\Phi_n$ and expressed in~\eqref{eq:phin}. 

Let the approximated Lyapunov-Krasovskii functional at order $n$
\begin{equation}\label{eq:Vn}
    V_n(\varphi) = \begin{bsmallmatrix}\varphi(0)\\ \Phi_n \end{bsmallmatrix}^{\top}
    \mathbf{P}_n 
    \begin{bsmallmatrix}\varphi(0)\\ \Phi_n \end{bsmallmatrix},
\end{equation}
for any $\varphi\in\mathcal{C}_{pw}(-h,0;\mathbb{R}^m)$, with matrix
\begin{equation}\label{eq:Pn}
    \mathbf{P}_n = \begin{bmatrix}
        U(0) & \mathbf{Q}_n   \\
        \ast & \mathbf{T}_n + \mathbf{I}_n^{-1}
    \end{bmatrix}.
\end{equation}
In the previous expression, we have
\begin{equation}\label{eq:QT}
\begin{aligned}
    \mathbf{Q}_n &= \int_{-h}^0\! U^\top(h+\tau)A_d \ell_n^\top(\tau)\dtau,\\
    \mathbf{T}_n &= \int_{-h}^0\!\int_{-h}^0\!\! \ell_n(\tau_1)A_d^\top U(\tau_1-\tau_2)A_d \ell_n^\top(\tau_2)\dtau_1\dtau_2.
\end{aligned}
\end{equation}

\begin{remark}
Note that $V_n$ does not involve the Legendre remainder $\tilde{\varphi}_{n}$. Functional $V_n$ is an approximation of the Lyapunov-Krasovskii functional $V$ defined by~\eqref{eq:V}.
\end{remark}

Based on the previous section on polynomial approximation, the convergence of this approximated functional towards the complete Lyapunov-Krasovskii functional given by~\eqref{eq:V} will be established in the next section. 

\subsection{Convergence of the approximated Lyapunov-Krasovskii functional}

Define the Lyapunov-Krasovskii functional remainder as
\begin{equation}
    \tilde{V}_n(\varphi) = V(\varphi) - V_n(\varphi),\quad \forall \varphi\in\mathcal{C}_{pw}(-h,0;\mathbb{R}^m).
\end{equation}
Applying expansion~\eqref{eq:expansion}, this remainder is rewritten as
\begin{equation}\label{eq:Vntilde}
\begin{array}{lcl}
    \tilde{V}_n(\varphi) \!&=&\!\! \displaystyle \int_{-h}^0\! \int_{-h}^0 \!\begin{bsmallmatrix}\varphi(0)\\\!\varphi_{n}(\tau_1)\!\\\!\tilde{\varphi}_{n}(\tau_1)\!\\\!\tilde{\varphi}_{n}(\tau_2)\!\end{bsmallmatrix}^{\!\top}
    \!\Delta_n(\tau_1,\tau_2)\tilde{\varphi}_{n}(\tau_2)\dtau_1\dtau_2,
\end{array}
\end{equation}
where
\begin{equation}
    \Delta_n(\tau_1,\tau_2) \!=\!
    \begin{bmatrix}\frac{2}{h}U^\top(h+\tau_2)A_d\\
    2A_d^\top U(\tau_1-\tau_2)A_d\\
    A_d^{\top}U(\tau_1-\tau_2)A_d\\
    \frac{1}{h}I_m \end{bmatrix}.
\end{equation}

The main idea is now to prove, at least in the compact subset~$\mathcal{S}$ of $\mathcal{C}_{pw}(-h,0;\mathbb{R}^m)$ given by~\eqref{eq:S}, that the approximated Lyapunov-Krasovskii functional~$V_n$ given by~\eqref{eq:Vn} converges towards the complete Lyapunov-Krasovskii functional~$V$ given by~\eqref{eq:V} with a guaranteed and quantified convergence rate. 

\begin{lemma}\label{lem:conv}
For any $\varphi$ in $\mathcal{S}$ and $\eta>0$, we have
\begin{equation}
    \norm{\tilde{V}_n(\varphi)} \leq \eta, \quad \forall n\geq \mathcal{N}(\mathcal{E}(\eta)),
\end{equation}
where the Lyapunov-Krasovskii remainder $\tilde{V}_n$ is given by~\eqref{eq:Vntilde}, where the order $\mathcal{N}(\eps)$ is described in~\eqref{eq:neps} and where
\begin{equation}\label{eq:epsast}
    \mathcal{E}(\eta) = -\frac{\kappa_1+\kappa_2}{\kappa_2+1}+\sqrt{\left(\frac{\kappa_1+\kappa_2}{\kappa_2+1}\right)^2+\frac{\eta}{h(\kappa_2+1)}},
\end{equation}
with scalars $\kappa_1$ and $\kappa_2$ given by
\begin{equation}
    \kappa_1 = \underset{[0,h]}{\mathrm{max}}\norm{U(\tau)A_d},\quad \kappa_2 = \underset{[-h,h]}{\mathrm{max}}\norm{A_d^\top U(\tau)A_d}.
\end{equation}
\end{lemma}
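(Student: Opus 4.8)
The plan is to start from the explicit representation \eqref{eq:Vntilde} of the remainder $\tilde{V}_n(\varphi)$, carry out the matrix product against $\Delta_n$, and thereby split $\tilde{V}_n(\varphi)$ into four scalar integral contributions: one linear in $\tilde{\varphi}_n$ coupling with $\varphi(0)$ (kernel $\frac{2}{h}U^\top(h+\tau_2)A_d$), one bilinear in $\varphi_n$ and $\tilde{\varphi}_n$ (kernel $2A_d^\top U(\tau_1-\tau_2)A_d$), one quadratic in $\tilde{\varphi}_n$ (kernel $A_d^\top U(\tau_1-\tau_2)A_d$), and the pure term $\frac{1}{h}\int_{-h}^0\norm{\tilde{\varphi}_n(\tau)}^2\dtau$. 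Then I would bound $\norm{\tilde{V}_n(\varphi)}$ by applying the triangle inequality to these four pieces.

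For the term-by-term estimation I would collect the ingredients provided by the hypothesis $\varphi\in\mathcal{S}$ and by Lemma~\ref{lem:convf}. First, taking $k=0$ in the defining bounds of $\mathcal{S}$ in \eqref{eq:S} gives $\normC{\varphi}\leq 1$, together with $\norm{\varphi(0)}=1$; since $\varphi_n=\varphi-\tilde{\varphi}_n$, this also controls $\varphi_n$, both in sup-norm (by $1+\eps$) and, via Bessel's inequality $\int_{-h}^0\norm{\varphi_n(\tau)}^2\dtau\leq\int_{-h}^0\norm{\varphi(\tau)}^2\dtau$, in the $\mathcal{L}_2$ sense. Second, the matrix kernels appearing in $\Delta_n$ are uniformly bounded on the integration domain by $\kappa_1$ for the single-$A_d$ kernel (invoking the symmetry $U(-\tau)=U^\top(\tau)$ of the Lyapunov matrix) and by $\kappa_2$ for the double-$A_d$ kernels, whose argument ranges over $[-h,h]$. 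Third, Lemma~\ref{lem:convf} yields $\normC{\tilde{\varphi}_n}\leq\eps$ once $n\geq\mathcal{N}(\eps)$, which I would use both pointwise and, through $\int_{-h}^0\norm{\tilde{\varphi}_n(\tau)}\dtau\leq h\eps$ and $\int_{-h}^0\norm{\tilde{\varphi}_n(\tau)}^2\dtau\leq h\eps^2$, in the integrated estimates. Substituting these into the four terms produces a majorant of the form $\norm{\tilde{V}_n(\varphi)}\leq a\,\eps^2+b\,\eps$, where the constants assemble as $a=h(\kappa_2+1)$ and $b=2h(\kappa_1+\kappa_2)$.

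The concluding step is to invert this quadratic majorant. Imposing $a\,\eps^2+b\,\eps\leq\eta$ and reading off the admissible range for $\eps$, the threshold is precisely the positive root of $a\,\eps^2+b\,\eps-\eta=0$, namely $\eps=-\frac{b}{2a}+\sqrt{(\frac{b}{2a})^2+\frac{\eta}{a}}$, which with the above values of $a$ and $b$ is exactly the quantity $\mathcal{E}(\eta)$ of \eqref{eq:epsast}. Choosing $\eps=\mathcal{E}(\eta)$ and $n\geq\mathcal{N}(\mathcal{E}(\eta))$ then guarantees $\normC{\tilde{\varphi}_n}\leq\mathcal{E}(\eta)$, so the quadratic bound returns $\norm{\tilde{V}_n(\varphi)}\leq\eta$, as claimed.

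I expect the main obstacle to be the bilinear and quadratic double-integral terms in $\tilde{\varphi}_n$: unlike the two contributions carrying a $\frac{1}{h}$ prefactor, these must be handled by combining the sup-bound on $\tilde{\varphi}_n$ with Cauchy--Schwarz and the Bessel control of $\varphi_n$, and every resulting power of $h$ and every occurrence of $\kappa_1,\kappa_2$ must be tracked carefully so that they coalesce into exactly the quadratic whose positive root is $\mathcal{E}(\eta)$. A secondary subtlety is that $\varphi_n$ does not itself belong to $\mathcal{S}$, so its magnitude has to be recovered indirectly from $\varphi$ and $\tilde{\varphi}_n$ rather than from the definition of the compact set.
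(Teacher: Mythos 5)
Your proposal follows essentially the same route as the paper's proof: expand $\tilde V_n$ via \eqref{eq:Vntilde}, bound the four terms using $\kappa_1$, $\kappa_2$, $\norm{\varphi(0)}=1$, $\normC{\varphi}\le 1$ and Lemma~\ref{lem:convf}, arrive at the quadratic majorant $h(\kappa_2+1)\eps^2+2h(\kappa_1+\kappa_2)\eps\le\eta$, and identify $\mathcal{E}(\eta)$ as its positive root. The only (welcome) refinement is your explicit control of $\varphi_n$ through $\varphi_n=\varphi-\tilde\varphi_n$ and Bessel's inequality, a point the paper passes over by directly replacing $\kappa_2\norm{\varphi_n(\tau)}$ with $\kappa_2$.
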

\begin{proof}
An upper bound of $|\tilde{V}_n|$ is obtained by
\begin{equation}
\begin{array}{rcl}
    \norm{\tilde{V}_n(\varphi)} \!\!\!\!&\leq&\!\!\!\!\displaystyle 2 \int_{-h}^0\!\!  \left(\kappa_1\norm{\varphi(0)}+\kappa_2\norm{\varphi_n(\tau)}\right)\norm{\tilde{\varphi}_n(\tau)}\dtau\\
    &&\!\!\!\!\displaystyle + (\kappa_2+1)\int_{-h}^0 \norm{\tilde{\varphi}_n(\tau)}^2\dtau.
\end{array}
\end{equation}
Hence, having $\varphi$ in $\mathcal{S}$, $\normC{\varphi}=\underset{[-h,0]}{\mathrm{sup}}\norm{\varphi(\tau)}\leq 1$ so that
\begin{equation}
    \frac{1}{h}\norm{\tilde{V}_n(\varphi)} \leq 2(\kappa_1+\kappa_2) \normC{\tilde{\varphi}_{n}} + (\kappa_2+1)\normC{\tilde{\varphi}_{n}}^2.
\end{equation}
We obtain $\norm{\tilde{V}_n(\varphi)}\leq \eta$ under the following quadratic constraint
\begin{equation}
   -\frac{\eta}{h(\kappa_2+1)} + 2 \frac{\kappa_1+\kappa_2}{\kappa_2+1} \normC{\tilde{\varphi}_{n}} + \normC{\tilde{\varphi}_{n}}^2 \leq 0,
\end{equation}
which is satisfied for $\normC{\tilde{\varphi}_n}\leq\mathcal{E}(\eta)$.
The conclusion is finally drawn thanks to Lemma~\ref{lem:convf}, which states that $\normC{\tilde{\varphi}_{n}}\leq \mathcal{E}(\eta)$ holds for any order $n$ greater than $\mathcal{N}(\mathcal{E}(\eta))$. 
\end{proof}

\begin{remark}
Notice that the maximal values $\kappa_1$ and $\kappa_2$ can easily be computed by grid search with an equally-spaced grid.\\
\end{remark}

Contrary to previous works based on discretization procedures with exponential kernels~\cite{Mondie2017,Mondie2021} or splines~\cite{Zhabko2019,Gu2013} which were limited to algebraic convergence rates, we take the benefits of Legendre polynomial approximation to obtain a supergeometric convergence rate on the remainder $|\tilde{V}_n(\varphi)|$. Therefore, the proposed convergence property of the remainder will be the key to design a stability test for time-delay systems that extends and enhances existing results.

\subsection{Necessary and sufficient stability test}

Lemmas~\ref{lem:CN} and~\ref{lem:CS} applied to the approximated Lyapunov-Krasovskii functional~$V_n$ defined by~\eqref{eq:Vn} provide a new necessary and sufficient condition of stability for system~\eqref{eq:tds}.\\

\begin{theorem}\label{thm:CNS}
System~\eqref{eq:tds} is exponentially stable if and only if matrix $\mathbf{P}_{\mathcal{N}(\mathcal{E}(\eta_0))}$ in~\eqref{eq:Pn} is positive definite where $\mathcal{N},\mathcal{E}$ are defined in~\eqref{eq:neps},~\eqref{eq:epsast}, respectively. 
\end{theorem}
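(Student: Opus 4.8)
The plan is to transfer Lemmas~\ref{lem:CN} and~\ref{lem:CS}, which concern the complete functional~$V$, onto the approximated functional~$V_n$ by controlling the remainder through Lemma~\ref{lem:conv}, exploiting that $V_n(\varphi)=\begin{bsmallmatrix}\varphi(0)\\\Phi_n\end{bsmallmatrix}^\top\mathbf{P}_n\begin{bsmallmatrix}\varphi(0)\\\Phi_n\end{bsmallmatrix}$ depends on $\varphi$ only through the pair $(\varphi(0),\Phi_n)$. I write $n^\ast=\mathcal{N}(\mathcal{E}(\eta_0))$ throughout. First I would record a spectral dichotomy coming from the standing Lyapunov condition (Remark~\ref{rem:Lyapcond}), i.e. $\N$ nonsingular: it forbids eigenvalue pairs $s_1,s_2$ with $s_1+s_2=0$, and since the system is real, conjugate pairs $\pm\mathrm{i}\omega$ and the value $0$ are thereby excluded, so no characteristic root lies on the imaginary axis. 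Hence the system is either exponentially stable (all roots in the open left half-plane) or has a root with strictly positive real part, which is precisely the hypothesis of Lemma~\ref{lem:CS}.

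\emph{Necessity.} Assuming exponential stability, fix a nonzero pair $(z_0,Z)\in\mathbb{R}^m\times\mathbb{R}^{n^\ast m}$ and realize it by a sequence $\varphi_j=\ell_{n^\ast}^\top Z+w\,b_j$ with $w=z_0-\ell_{n^\ast}^\top(0)Z$ and $b_j$ a scalar boundary layer with $b_j(0)=1$, support shrinking to $\{0\}$ and $\normL{b_j}\to0$. Then $\varphi_j(0)=z_0$ exactly, $\Phi_{n^\ast}(\varphi_j)\to Z$ and $\normL{\tilde{\varphi}_{n^\ast}(\varphi_j)}\to0$; feeding the last limit into~\eqref{eq:Vntilde} and bounding $U,A_d$ by $\kappa_1,\kappa_2$ as in the proof of Lemma~\ref{lem:conv} (with the $\mathcal{L}_2$-norm replacing the supremum norm) forces $\tilde{V}_{n^\ast}(\varphi_j)\to0$. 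Passing to the limit in $V_{n^\ast}=V-\tilde{V}_{n^\ast}$ and invoking Lemma~\ref{lem:CN},
\begin{equation*}
\begin{bsmallmatrix}z_0\\Z\end{bsmallmatrix}^{\!\top}\!\mathbf{P}_{n^\ast}\begin{bsmallmatrix}z_0\\Z\end{bsmallmatrix}=\lim_j V_{n^\ast}(\varphi_j)\geq \eta\Big(\norm{z_0}^2+\tfrac{1}{h}Z^\top\mathbf{I}_{n^\ast}^{-1}Z\Big)>0,
\end{equation*}
so $\mathbf{P}_{n^\ast}\succ0$. (The same construction in fact yields $\mathbf{P}_n\succ0$ for every order $n$.)

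\emph{Sufficiency.} I argue by contraposition. If the system is not exponentially stable, the dichotomy above gives a root with strictly positive real part, so Lemma~\ref{lem:CS} provides $\varphi\in\mathcal{S}$ with $V(\varphi)\leq-\eta_0<0$. Since $n^\ast=\mathcal{N}(\mathcal{E}(\eta_0))$, Lemma~\ref{lem:conv} applied with $\eta=\eta_0$ gives $\norm{\tilde{V}_{n^\ast}(\varphi)}\leq\eta_0$, whence
\begin{equation*}
V_{n^\ast}(\varphi)=V(\varphi)-\tilde{V}_{n^\ast}(\varphi)\leq -\eta_0+\eta_0=0 .
\end{equation*}
As $\varphi\in\mathcal{S}$ imposes $\norm{\varphi(0)}=1$, the vector $\begin{bsmallmatrix}\varphi(0)\\\Phi_{n^\ast}\end{bsmallmatrix}$ is nonzero, so $\mathbf{P}_{n^\ast}$ is nonpositive on a nonzero vector and cannot be positive definite. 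Contraposing closes the equivalence.

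I expect two delicate points. First, in the necessity step the assignment $\varphi\mapsto(\varphi(0),\Phi_n)$ cannot be inverted within degree-$(n-1)$ polynomials, since a polynomial fixes $\varphi(0)$ once $\Phi_n$ is fixed; the boundary-layer device, together with the verification that its $\mathcal{L}_2$-small residual makes $\tilde{V}_n$ vanish in the limit, is exactly what lifts positivity from the polynomial subspace to all of $\mathbb{R}^{(n+1)m}$. Second, and more essentially, sufficiency rests on the exact calibration $n^\ast=\mathcal{N}(\mathcal{E}(\eta_0))$: the remainder bound of Lemma~\ref{lem:conv} must be driven below the destabilizing margin $\eta_0$ of Lemma~\ref{lem:CS}, and it is the supergeometric rate of Lemma~\ref{lem:convf} that makes this achievable at a moderate order. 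Note finally that only the non-strict inequality $V_{n^\ast}(\varphi)\leq0$ is needed there, which already contradicts positive definiteness, so the loss of strictness is harmless.
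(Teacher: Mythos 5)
Your proof is correct and, for the sufficiency direction, coincides with the paper's argument (Lemma~\ref{lem:CS} gives $V(\varphi)\leq-\eta_0$ on $\mathcal{S}$, Lemma~\ref{lem:conv} with $\eta=\eta_0$ caps the remainder, hence $V_{n^\ast}(\varphi)\leq 0$ on a vector with $\norm{\varphi(0)}=1$). Your explicit justification of the dichotomy via the Lyapunov condition of Remark~\ref{rem:Lyapcond} is a detail the paper leaves implicit, and it is welcome. The one place where you diverge is the necessity step: you build a boundary-layer sequence $\varphi_j$ to realize an arbitrary pair $(z_0,Z)$ and pass to the limit, whereas the paper exploits the fact that Lemma~\ref{lem:CN} is stated on all of $\mathcal{C}_{pw}(-h,0;\mathbb{R}^m)$ and simply takes the \emph{discontinuous} test function equal to $\ell_n^\top(\tau)\Phi_n$ on $[-h,0)$ and to $x$ at $\tau=0$; since the single-point modification does not affect any integral, one gets $V(\varphi)=V_n(\varphi)$ exactly and no limiting argument is needed. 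Your route is sound (the $\mathcal{L}_2$-smallness of the residual does kill $\tilde V_{n}(\varphi_j)$ via Cauchy--Schwarz, and $\Phi_n(\varphi_j)\to Z$), and it would be the right device if one insisted on continuous test functions, but here it buys nothing over the paper's one-line construction. Both versions yield the stronger conclusion $\mathbf{P}_n\succ 0$ for every $n$, as you note.
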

\begin{proof}
Assume that system~\eqref{eq:tds} is exponentially stable. For any vector $\begin{bsmallmatrix}x\\\Phi_n\end{bsmallmatrix}$ in $\mathbb{R}^{(n+1)m}$ and $n$ in $\mathbb{N}$, define function $\varphi$ as 
\begin{equation}
    \varphi(\tau) = \left\{ \begin{array}{ll}\ell_n^\top(\tau)\Phi_n,\quad &\forall \tau\in[-h,0), \\x,\quad& \text{if } \tau=0.\end{array}\right.
\end{equation}
Applying Lemma~\ref{lem:CN} with $\varphi$ given above, there exists $\eta>0$ such that
\begin{equation*}
\begin{aligned}
    V(\varphi) = V_n(\varphi) = \begin{bsmallmatrix}x\\\Phi_n\end{bsmallmatrix}^{\top}\mathbf{P}_n\begin{bsmallmatrix}x\\\Phi_n\end{bsmallmatrix} 
   \geq \eta \norm{\begin{bsmallmatrix}x\\\Phi_n\end{bsmallmatrix}}^2,
\end{aligned}
\end{equation*}
which yields $\mathbf{P}_n\succ 0$, for all $n\in\mathbb{N}$, since $x$ and $\Phi_n$ are any independent vectors.\\
Concerning the sufficiency, assume by contradiction that system~\eqref{eq:tds} is not exponentially stable, and that $\mathbf{P}_{\mathcal{N}(\mathcal{E}(\eta_0))}\succ 0$. This means that there exists a characteristic root of~\eqref{eq:tds} with a positive real part. Consequently, Lemma~\ref{lem:CS} ensures that there necessarily exists $\varphi$ in $\mathcal{S}$ such that $V(\varphi)<-\eta_0$ and
\begin{equation}
    V_n(\varphi) = V(\varphi) - \tilde{V}_n(\varphi) \leq -\eta_0 + \norm{\tilde{V}_n(\varphi)},
\end{equation}
with $\eta_0$ given by~\eqref{eq:CS}. Finally, the convergence presented in Lemma~\ref{lem:conv} with $\eta=\eta_0$ leads to 
\begin{equation}
    V_n(\varphi) = \begin{bsmallmatrix}x\\\Phi_n\end{bsmallmatrix}^{\top}\mathbf{P}_n\begin{bsmallmatrix}x\\\Phi_n\end{bsmallmatrix}  \leq 0, \; \forall n\geq \mathcal{N}(\mathcal{E}(\eta_0)),
\end{equation}
which contradicts $\mathbf{P}_{\mathcal{N}(\mathcal{E}(\eta_0))}\succ 0$.
\end{proof}

The proposed theorem provides a numerical test to guarantee stability or instability of time-delay systems, which follows the following sequence.
\begin{enumerate}
    \item Compute $n^{\ast}=\mathcal{N}(\mathcal{E}(\eta_0))$ with $\eta_0$ given by~\eqref{eq:CS}.
    \item Evaluate each element of matrix $\mathbf{P}_{n^\ast}$.
    \item Test the positivity of matrix $\mathbf{P}_{n^{\ast}}$ to state the stability.
\end{enumerate}
Notice that this necessary and sufficient stability condition is formulated as in~\cite{Mondie2021} on the positivity of matrix $\mathbf{P}_n$ for a given order $n$.\\

As a background result, a hierarchical sufficient condition for instability of system~\eqref{eq:tds} is also formulated below.

\begin{corol}\label{cor:CNS}
If there exists $n\in\mathbb{N}$ such that matrix $\mathbf{P}_n$ given by~\eqref{eq:Pn} is not definite positive then system~\eqref{eq:tds} is not exponentially stable. Moreover, if this statement holds at an order $n$, then it also holds at the order $n+1$.
\end{corol}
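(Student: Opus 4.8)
The first assertion is precisely the contrapositive of the necessity direction established within the proof of Theorem~\ref{thm:CNS}. There it is shown that exponential stability of~\eqref{eq:tds} forces $\mathbf{P}_n\succ0$ for every $n\in\mathbb{N}$, the argument being independent of the chosen order. Reading this implication the other way, as soon as a single order $n$ yields a matrix $\mathbf{P}_n$ that fails to be positive definite, system~\eqref{eq:tds} cannot be exponentially stable. No new computation is required for this part.

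The substantive content is the hierarchy statement, and the plan is to show that $\mathbf{P}_n$ is a leading principal submatrix of $\mathbf{P}_{n+1}$, after which the conclusion follows from eigenvalue interlacing. The key structural fact is the nesting of the Legendre basis vector~\eqref{eq:ln}, namely $\ell_{n+1}(\theta)=\begin{bsmallmatrix}\ell_n(\theta)\\ l_n(\theta)I_m\end{bsmallmatrix}$, since passing from order $n$ to $n+1$ merely appends the block $l_n(\theta)I_m$. Propagating this decomposition through the definitions~\eqref{eq:QT} gives $\mathbf{Q}_{n+1}=\begin{bmatrix}\mathbf{Q}_n & q_n\end{bmatrix}$ with $q_n=\int_{-h}^0 U^\top(h+\tau)A_d\, l_n(\tau)\dtau$, and shows that $\mathbf{T}_n$ is the leading $nm\times nm$ block of $\mathbf{T}_{n+1}$. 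Since $\mathbf{I}_n^{-1}=\diag(h,\tfrac{h}{3},\dots,\tfrac{h}{2n-1})\otimes I_m$, the matrix $\mathbf{I}_n^{-1}$ is likewise the leading block of $\mathbf{I}_{n+1}^{-1}$, the new diagonal entry being $\tfrac{h}{2n+1}I_m$. Collecting these observations in~\eqref{eq:Pn} shows that deleting the last $m$ rows and columns of $\mathbf{P}_{n+1}$ returns exactly $\mathbf{P}_n$.

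It then remains to transfer the loss of positive definiteness. If $\mathbf{P}_n$ is not positive definite, there is a nonzero $\begin{bsmallmatrix}x\\\Phi_n\end{bsmallmatrix}$ with $\begin{bsmallmatrix}x\\\Phi_n\end{bsmallmatrix}^\top\mathbf{P}_n\begin{bsmallmatrix}x\\\Phi_n\end{bsmallmatrix}\le0$. Zero-padding to order $n+1$ and using that $\mathbf{P}_n$ is the leading principal block of $\mathbf{P}_{n+1}$ gives $\begin{bsmallmatrix}x\\\Phi_n\\0\end{bsmallmatrix}^\top\mathbf{P}_{n+1}\begin{bsmallmatrix}x\\\Phi_n\\0\end{bsmallmatrix}=\begin{bsmallmatrix}x\\\Phi_n\end{bsmallmatrix}^\top\mathbf{P}_n\begin{bsmallmatrix}x\\\Phi_n\end{bsmallmatrix}\le0$, so $\mathbf{P}_{n+1}$ is not positive definite either; equivalently, Cauchy interlacing yields $\lambda_{\min}(\mathbf{P}_{n+1})\le\lambda_{\min}(\mathbf{P}_n)\le0$. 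The main obstacle is purely bookkeeping: one must carefully check that each of $\mathbf{Q}_n$, $\mathbf{T}_n$ and $\mathbf{I}_n^{-1}$ sits as the correct leading block of its order-$(n+1)$ counterpart, which hinges entirely on the block-nesting of $\ell_n$ and on the diagonal (orthogonal) structure built into $\mathbf{I}_n$.
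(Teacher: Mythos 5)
Your proposal is correct and takes essentially the same route as the paper: the first claim is read off from the necessity part of the proof of Theorem~\ref{thm:CNS}, and the hierarchy follows from writing $\mathbf{P}_{n+1}$ with $\mathbf{P}_n$ as its leading principal block (via the nesting of $\ell_n$ inside $\ell_{n+1}$) and noting that a principal submatrix of a positive definite matrix must itself be positive definite. You merely spell out the zero-padding/interlacing step that the paper leaves implicit.
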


\begin{proof}
Relying on the necessity part of the proof of Theorem~\ref{thm:CNS}, the sufficient condition for instability is trivial. The hierarchy can then be proven because matrix~$\mathbf{P}_{n+1}$ at order $n+1$ can be written as
\begin{equation*}
    \mathbf{P}_{\!n+1} \!=\! \begin{bsmallmatrix} \!\mathbf{P}_n & \frac{1}{h}\!\int_{-h}^0\! U^\top(h+\tau)A_d l_n(\tau)\dtau\\ \ast & \!\frac{1}{h^2}\!\int_{-h}^0\!\int_{-h}^0\!\! l_n(\tau_1)A_d^\top U(\tau_1-\tau_2)A_d l_n(\tau_2)\dtau_1\dtau_2 + \frac{h}{2n+1}I_m\!  \end{bsmallmatrix}.
\end{equation*}
If $\mathbf{P}_{n}$ is not positive definite then $\mathbf{P}_{n+1}$ cannot be positive definite.
\end{proof}

Interestingly, Corollary~\ref{cor:CNS} suggests an algortihm to solve the instability test. It consists in testing $\mathbf{P}_n\succ 0$ from $n=1$ to $n=n^\ast$. If $\mathbf{P}_n$ is not definite positive, then the system is unstable. Once order $n^\ast=\mathcal{N}(\mathcal{E}(\eta_0))$ is reached, the system is necessarily stable. 

It remains to solve the important problem of the numerical computation of matrix $\mathbf{P}_n$, which is necessarily to implement the algorithm. This is detailed in the next section.

\section{Computational issues}

\subsection{Numerical issues}

To perform the numerical test presented above, each coefficient of matrix~$\mathbf{P}_n$ given by~\eqref{eq:Pn} needs to be evaluated numerically. It is worth noticing that this problem is not encountered in~\cite{Mondie2021} since the matrix to be evaluated contains point-wise evaluations of the Lyapunov matrix $U$. Here, the situation is more complicated since the matrix $\mathbf{P}_n$ given by~\eqref{eq:Pn} is equal to
\begin{equation}
    \mathbf{P}_n \!\!=\!\! \begin{bsmallmatrix} \!\!U(0) & Q_0^\top A_d & \cdots & Q_{n-1}^\top A_d\\ 
    \ast &\!\! A_d^\top (T_{00}+T_{00}^{\flat\top}) A_d + hI_m \!& \cdots & A_d^\top (T_{0n-1}+T_{0n-1}^{\flat\top}) A_d  \\
    \ast & \ast & \ddots & \vdots\\
    \ast & \ast & \ast & \!A_d^\top (T_{\underset{n-1}{n-1}}+T_{\underset{n-1}{n-1}}^{\flat\top}) A_d  + \frac{h}{2n-1}I_m\!
    \end{bsmallmatrix}\!,
\end{equation} 
where $Q_k,T_{jk}$ and $T_{jk}^\flat$ are the Legendre coefficients of the Lyapunov matrix $U$ given in the vector form as follows
\begin{equation}\label{eq:QTkvec}
\left\{
\begin{aligned}
    \mathcal{Q}_k = \mathrm{vec}(Q_k) &= \begin{bsmallmatrix}I_{m^2}&0\end{bsmallmatrix}\Gamma_k\N^{-1}\begin{bsmallmatrix}-\mathrm{vec}(I_m)\\0\end{bsmallmatrix},\\
    \mathcal{T}_{jk} = \mathrm{vec}(T_{jk}) &= \begin{bsmallmatrix}I_{m^2}&0\end{bsmallmatrix}\bar{\Gamma}_{jk}\N^{-1}\begin{bsmallmatrix}-\mathrm{vec}(I_m)\\0\end{bsmallmatrix},\\
    \mathcal{T}_{jk}^\flat = \mathrm{vec}(T_{jk}^\flat) &= \begin{bsmallmatrix}I_{m^2}&0\end{bsmallmatrix}\bar{\Gamma}_{jk}^\flat\N^{-1}\begin{bsmallmatrix}-\mathrm{vec}(I_m)\\0\end{bsmallmatrix},
\end{aligned}
\right.
\end{equation}
and where $\Gamma_k$, $\bar{\Gamma}_{jk}$ and $\bar{\Gamma}_{jk}^\flat$ are defined by
\begin{align}
    \Gamma_k &= \int_{-h}^0 \e^{(h+\tau)\M}l_k(\tau)\dtau,\; \forall k\in\{0,\dots,n-1\}, \label{eq:gammak}\\
    \bar{\Gamma}_{jk} &= \int_{-h}^0 \left(\int_{-h}^{\tau_1} \e^{(\tau_1-\tau_2)\M}l_k(\tau_2)\dtau_2\right) l_j(\tau_1) \dtau_1,\label{eq:gammajk1}\\
    \bar{\Gamma}_{jk}^\flat &= \int_{-h}^0 \left(\int_{\tau_1}^0 \e^{(\tau_2-\tau_1)\M}l_k(\tau_2)\dtau_2\right) l_j(\tau_1) \dtau_1,\label{eq:gammajk2}\\
    &\qquad\qquad\qquad\qquad\qquad \forall (j,k)\in\{0,\dots,n-1\}^2.\notag
\end{align}
The question of the numerical implementation of these integral terms in a reasonable time is then raised. Such computations can be done analytically by computer algebra systems but may turn out to be a tough task, especially for large $n$ or $m$. For instance, for $m=4$ and $n=100$, the exact calculation of~$\mathbf{P}_n$ can take days on a basic computer. An alternative computation through inductive relations is proposed to face the problem and make our results tractable numerically.

\subsection{Iterative calculation of Legendre exponential coefficients}

The numerical issue resumes to the calculation of Legendre polynomials coefficients of exponential matrices~\eqref{eq:gammak}-\eqref{eq:gammajk2}. To perform this computation recursively, the following relations can be used.

\begin{prop}\label{prop1}
If $\M$ is a non singular matrix, then matrices $\Gamma_k$ in~\eqref{eq:gammak} can be computed by the recursive relation 
\begin{equation}\label{eq:rec1}
    \Gamma_k = \Gamma_{k-2} - \frac{2(2k-1)}{h}\M^{-1}\Gamma_{k-1}, \quad\forall k\geq 2,
\end{equation} 
initialized with
\begin{equation}
    \begin{bsmallmatrix} \Gamma_0 \\ \Gamma_1 \end{bsmallmatrix} = \begin{bsmallmatrix} \M^{-1}(\e^{h\M}-I_{2m^2}) \\ \M^{-1}(\e^{h\M}+I_{2m^2}) - \frac{2}{h}\M^{-1}\Gamma_0 \end{bsmallmatrix}.
\end{equation}
\end{prop}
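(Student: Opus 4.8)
The plan is to combine a single integration by parts with the classical differentiation identity of the Legendre family. Two elementary facts are needed. Reading off~\eqref{eq:lk} at $\frac{\tau+h}{h}=1$ and $\frac{\tau+h}{h}=0$ gives the boundary values $l_k(0)=1$ and $l_k(-h)=(-1)^k$. Second, the shifted polynomials inherit the differentiation recurrence
\[
    \dot{l}_k(\tau)-\dot{l}_{k-2}(\tau)=\frac{2(2k-1)}{h}\,l_{k-1}(\tau),
\]
which is the image on $[-h,0]$ of the identity $P_k'-P_{k-2}'=(2k-1)P_{k-1}$ for the Legendre polynomials on $[-1,1]$, the factor $\frac{2}{h}$ arising from the affine change of variable $x=\frac{2(\tau+h)}{h}-1$.

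Next I would integrate $\Gamma_k$ in~\eqref{eq:gammak} by parts, using the non-singularity of $\M$ to take $\M^{-1}\e^{(h+\tau)\M}$ as a primitive of $\e^{(h+\tau)\M}$. Writing $J_k=\int_{-h}^0 \e^{(h+\tau)\M}\dot{l}_k(\tau)\dtau$ and evaluating the boundary term with $l_k(0)=1$, $l_k(-h)=(-1)^k$ and $\e^{(h+\tau)\M}\big|_{\tau=-h}=I_{2m^2}$, this produces
\[
    \M\Gamma_k=\e^{h\M}-(-1)^k I_{2m^2}-J_k.
\]

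The key step is then to eliminate $J_k$. Because $(-1)^{k-2}=(-1)^k$, the constant boundary contributions in the analogous identity for $\Gamma_{k-2}$ coincide with those for $\Gamma_k$, so subtracting yields $J_k-J_{k-2}=\M(\Gamma_{k-2}-\Gamma_k)$. On the other hand, integrating the differentiation recurrence against $\e^{(h+\tau)\M}$ gives $J_k-J_{k-2}=\frac{2(2k-1)}{h}\Gamma_{k-1}$. Equating the two expressions and left-multiplying by $\M^{-1}$ returns exactly~\eqref{eq:rec1}. I expect the only delicate point to be the bookkeeping of the boundary terms, specifically noticing that they cancel in the difference so that no residual exponential survives; everything else is a direct substitution.

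Finally I would verify the initialization by direct computation. With $l_0\equiv 1$, one gets $\Gamma_0=\int_{-h}^0\e^{(h+\tau)\M}\dtau=\M^{-1}(\e^{h\M}-I_{2m^2})$. For $\Gamma_1$, substituting $l_1(\tau)=\frac{2(\tau+h)}{h}-1$ and evaluating $\int_{-h}^0(\tau+h)\e^{(h+\tau)\M}\dtau$ by one further integration by parts recovers $\Gamma_1=\M^{-1}(\e^{h\M}+I_{2m^2})-\frac{2}{h}\M^{-1}\Gamma_0$, which completes the proof.
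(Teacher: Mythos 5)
Your argument is correct and follows essentially the same route as the paper's proof: both rest on the derivative recurrence $\dot{l}_k-\dot{l}_{k-2}=\frac{2(2k-1)}{h}l_{k-1}$, the endpoint values $l_k(0)=1$, $l_k(-h)=(-1)^k$, and one integration by parts with $\M^{-1}\e^{(h+\tau)\M}$ as primitive; the paper merely subtracts first and integrates the difference $\int_{-h}^0\e^{(h+\tau)\M}(l_k-l_{k-2})(\tau)\dtau$ by parts (so the boundary term vanishes outright), whereas you integrate each $\Gamma_k$ by parts and cancel the boundary contributions in the subtraction. The initialization computations coincide with the paper's.
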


\begin{prop}\label{prop2}
For any matrix $\M$ and for matrices $\bar{\Gamma}_{jk}$, $\bar{\Gamma}^\flat_{jk}$ expressed in~\eqref{eq:gammajk1},\eqref{eq:gammajk2}, the following equality holds
\begin{equation}
    \bar{\Gamma}_{jk} = (-1)^{j+k}\bar{\Gamma}^\flat_{jk},\quad \forall (j,k)\in\{0,\dots,n-1\}^2.
\end{equation}
\end{prop}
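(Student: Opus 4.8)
The plan is to exploit the reflection symmetry of the Legendre polynomials about the midpoint $-\tfrac{h}{2}$ of the interval $[-h,0]$, combined with a change of variables that carries the lower-triangular integration domain of $\bar{\Gamma}_{jk}$ onto the upper-triangular domain of $\bar{\Gamma}_{jk}^\flat$. The whole argument rests on one parity identity, so I would establish that first.

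First I would prove the symmetry relation $l_k(-h-\tau)=(-1)^k l_k(\tau)$ for all $\tau\in[-h,0]$. Setting $s=\frac{\tau+h}{h}\in[0,1]$, the explicit formula~\eqref{eq:lk} shows that $l_k$ equals, up to the sign $(-1)^k$, the shifted Legendre polynomial $p_k(s)=\sum_{j=0}^k(-1)^j\binom{k}{j}\binom{k+j}{j}s^j$, and the reflection $\tau\mapsto -h-\tau$ corresponds exactly to $s\mapsto 1-s$. Invoking the classical parity $p_k(1-s)=(-1)^k p_k(s)$ of shifted Legendre polynomials (itself a consequence of $P_k(-x)=(-1)^kP_k(x)$ on $[-1,1]$) yields $l_k(-h-\tau)=(-1)^k l_k(\tau)$ at once.

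Then I would substitute $\tau_1=-h-\sigma_1$ and $\tau_2=-h-\sigma_2$ in the double integral~\eqref{eq:gammajk1} defining $\bar{\Gamma}_{jk}$. Three things must be tracked at once: the kernel argument transforms as $\tau_1-\tau_2=\sigma_2-\sigma_1$, matching the exponent $\e^{(\tau_2-\tau_1)\M}$ appearing in $\bar{\Gamma}_{jk}^\flat$; the two Legendre factors each contribute a sign, giving $l_j(\tau_1)l_k(\tau_2)=(-1)^{j+k}l_j(\sigma_1)l_k(\sigma_2)$ by the identity above; and the inner bound $\tau_2\in[-h,\tau_1]$ becomes $\sigma_2\in[\sigma_1,0]$, so the region $-h\le\tau_2\le\tau_1\le0$ is sent precisely onto $-h\le\sigma_1\le\sigma_2\le0$, which is the domain of~\eqref{eq:gammajk2}. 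The two orientation-reversing differentials $\mathrm{d}\tau_1=-\mathrm{d}\sigma_1$ and $\mathrm{d}\tau_2=-\mathrm{d}\sigma_2$ combine with the corresponding swap of integration limits to restore the original orientation, so no extra sign survives. Collecting terms leaves exactly $(-1)^{j+k}\bar{\Gamma}_{jk}^\flat$, which is the claim.

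The main obstacle is bookkeeping rather than conceptual: one must be meticulous about the orientation of each integral after the reflection and confirm that the substitution genuinely interchanges the two triangular domains, since a sign error in either a Jacobian factor or a limit swap would corrupt the final parity. I would also emphasize that no invertibility or structural hypothesis on $\M$ enters anywhere, consistent with the proposition being stated \emph{for any matrix} $\M$.
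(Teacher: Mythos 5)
Your proof is correct and follows essentially the same route as the paper: the reflection $\tau_i\mapsto -h-\tau_i$ mapping the lower-triangular domain onto the upper-triangular one, combined with the parity identity $l_k(-h-\tau)=(-1)^k l_k(\tau)$. The only difference is that you also justify the parity identity from the explicit formula~\eqref{eq:lk}, which the paper simply cites as a known property of Legendre polynomials.
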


\begin{prop}\label{prop3}
If $\M$ is a non singular matrix, then matrices $\bar{\Gamma}_{jk}$ in~\eqref{eq:gammajk1} can be computed by the following relations
\begin{equation}\label{eq:rec2}
\bar{\Gamma}_{jk} \!=\!
\left\{
    \begin{array}{ll}
        \!\!(-1)^{j+k}\bar{\Gamma}_{kj}, &\forall k\!<\!j,\\
        \!\!\!\begin{pmatrix}
        \!\bar{\Gamma}_{jk-2} \!+\!  \M^{-1}\! \frac{2(2k-1)}{h} \bar{\Gamma}_{jk-1}\! \\
        \!-\! \M^{-1}\! \frac{h}{2j+1}(\delta_{jk}-\delta_{jk-2})\end{pmatrix}\!, &\forall k\!\geq\! \mathrm{max}(2,j),
    \end{array}
\right.
\end{equation} 
initialized with
\begin{equation}
    \begin{bsmallmatrix} \bar{\Gamma}_{00}\\ \bar{\Gamma}_{01}\\ \bar{\Gamma}_{11}\end{bsmallmatrix}
    = \begin{bsmallmatrix}\M^{-1}(\Gamma_0-hI_{2m^2}) \\ -\M^{-1}\Gamma_1 \\ \M^{-1} \left((\frac{2}{h}\M^{-1}-I_{2m^2})\Gamma_1 - \frac{h}{3}I_{2m^2}\right)\end{bsmallmatrix}.
\end{equation}
\end{prop}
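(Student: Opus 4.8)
The final statement to prove is Proposition~\ref{prop3}, which gives a recursion for the double-integral coefficients $\bar\Gamma_{jk}$ together with initial data. My plan is to derive the recursion directly from the definition~\eqref{eq:gammajk1} by exploiting the two standard Legendre identities on $[-h,0]$: the three-term recurrence relating $l_k$ to $l_{k-1},l_{k-2}$, and the derivative/integration identity expressing $\int_{-h}^{\tau_1} l_k(\tau_2)\,\dtau_2$ in terms of $l_{k+1}$ and $l_{k-1}$. The symmetry branch $\bar\Gamma_{jk}=(-1)^{j+k}\bar\Gamma_{kj}$ for $k<j$ is immediate from Proposition~\ref{prop2} (which gives $\bar\Gamma_{jk}=(-1)^{j+k}\bar\Gamma^\flat_{jk}$) combined with the observation that swapping the names of the integration variables in~\eqref{eq:gammajk2} turns $\bar\Gamma^\flat_{jk}$ into $\bar\Gamma_{kj}$; so the real content is the forward recursion in $k$ for $k\geq\mathrm{max}(2,j)$.

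For that main branch, the key step is to integrate by parts in the inner variable $\tau_2$. Writing $G_k(\tau_1)=\int_{-h}^{\tau_1}\e^{(\tau_1-\tau_2)\M}l_k(\tau_2)\,\dtau_2$, I would first establish an inner recursion for $G_k$ by substituting the Legendre three-term recurrence for $l_k(\tau_2)$, handling the factor $\e^{(\tau_1-\tau_2)\M}$ either by pulling out $\M^{-1}$ through an integration by parts (which produces the $\M^{-1}\frac{2(2k-1)}{h}$ coefficient and is where the nonsingularity of $\M$ is used) or by using the primitive identity for Legendre polynomials. The boundary terms from integration by parts evaluate $l_k$ at $\tau_2=\tau_1$ and $\tau_2=-h$, and using $l_k(-h)=(-1)^k$ these collapse cleanly. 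I would then multiply by $l_j(\tau_1)$ and integrate in $\tau_1$ over $[-h,0]$; orthogonality of the Legendre family, $\int_{-h}^0 l_j l_i\,\dtau=\frac{h}{2j+1}\delta_{ji}$, is what kills most cross terms and generates the Kronecker-delta correction $-\M^{-1}\frac{h}{2j+1}(\delta_{jk}-\delta_{jk-2})$. Matching the surviving terms to~\eqref{eq:rec2} completes the recursion.

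Finally, the three initial matrices $\bar\Gamma_{00},\bar\Gamma_{01},\bar\Gamma_{11}$ I would compute directly. Each reduces to a single- or double-integral of $\e^{(\tau_1-\tau_2)\M}$ against low-order Legendre polynomials $l_0\equiv 1$ and $l_1$; carrying out the inner integral by parts expresses everything in terms of $\Gamma_0,\Gamma_1$ already available from Proposition~\ref{prop1}, plus elementary $\int_{-h}^0 l_j\,\dtau$ terms that contribute the $-hI_{2m^2}$ and $-\frac{h}{3}I_{2m^2}$ pieces. I expect the main obstacle to be bookkeeping rather than conceptual: tracking the boundary contributions from the repeated integrations by parts and correctly pairing the shift indices $k,k-1,k-2$ against the orthogonality index $j$, so that the delta terms land on exactly $\delta_{jk}$ and $\delta_{jk-2}$ and nothing spurious survives. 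Care is also needed at the lower edge $k=\mathrm{max}(2,j)$ to confirm the recursion references only already-defined entries (using the symmetry branch when $k-1$ or $k-2$ would otherwise fall below $j$), which is what guarantees the scheme is well-posed for all $(j,k)\in\{0,\dots,n-1\}^2$.
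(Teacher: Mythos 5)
Your plan is correct and matches the paper's own proof: the paper obtains the $k\geq\max(2,j)$ branch by exactly the inner integration by parts you describe, using $l_k'-l_{k-2}'=\frac{2(2k-1)}{h}l_{k-1}$ together with $l_k(-h)=(-1)^k$ to kill the boundary term at $\tau_2=-h$, and then Legendre orthogonality $\int_{-h}^0 l_jl_i\,\dtau=\frac{h}{2j+1}\delta_{ji}$ in the outer integral to produce the $\delta_{jk}-\delta_{jk-2}$ correction; the symmetry branch is likewise obtained by the variable swap combined with Proposition~\ref{prop2}, and the initial values by direct integration by parts in terms of $\Gamma_0,\Gamma_1$. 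No substantive difference from the paper's argument.
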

\begin{proof}
The proofs of these propositions are respectively postponed to Appendices~\ref{app1}, \ref{app2} and \ref{app3}.
\end{proof}
\begin{remark}
Notice that the case where $M$ is singular has not been discussed in this paper. It could for instance be treated using the Jordan canonical form.\\
\end{remark}

These propositions allows to propose a numerical solution to compute the integral terms~\eqref{eq:gammak}-\eqref{eq:gammajk2} by induction. The induction requires a number of operations in the range of $n^2$ and $m^2$. The necessary and sufficient condition of stability for time-delay systems presented in this paper becomes numerically tractable in a reasonable time. In the last section, our numerical test for stability is performed on two examples.

\section{Application to numerical examples}

\subsection{Presentation of the examples}

\begin{example}\label{ex1}
Consider~\eqref{eq:tds} with $A=1$ and $A_d=-2$.
\end{example}

\begin{example}\label{ex2}
Consider~\eqref{eq:tds} with $A=\begin{bsmallmatrix}0&0&1&0\\0&0&0&1\\-10-K&10&0&0\\5&-15&0&-0.25\end{bsmallmatrix}$ and $A_d=\begin{bsmallmatrix}0&0&0&0\\0&0&0&0\\K&0&0&0\\0&0&0&0\end{bsmallmatrix}$, for any $K>0$.
\end{example}

\subsection{Numerical results on stability analysis}

Theorem~\ref{thm:CNS} is applied for point-wise values of delays to evaluate the stability of these systems with respect to the delay. The results are gathered in Table~\ref{tab:ex1} and~\ref{tab:ex2} for Example~\ref{ex1} and~\ref{ex2}, respectively. For both systems, one can see that the maximal allowable delay $h$, which guarantees the stability, can be given with a precision $0.001$. 

\begin{table}[!t]
    \centering
    \caption{Evaluation and comparison of our necessary and sufficient test for stability for Example~\ref{ex1} with several delays.}
    \begin{tabular}{|c|c|c|c|c|}
    \hline
        Delay $h$ & Result & Order $n^\ast$ & CPU time & Order $n^\ast$~\cite{Mondie2021}  \\
        \hline
        $0.1$ & Stable & $4$ & $0.2$s & $36$ \\
        $0.604$ & Stable & $13$ & $0.9$s & $\simeq 10^8$\\
        $0.605$ & Unstable & $13$ & $0.9$s & $\simeq 10^8$ \\
        $2$ & Unstable & $24$ & $2.5$s & $\simeq 10^{12}$ \\
    \hline
    \end{tabular}
    \label{tab:ex1}
\end{table}

\begin{table}[!t]
    \centering
    \caption{Evaluation of our necessary and sufficient test for stability for Example~\ref{ex2} for $K=10$ with several delays $h$.}
    \begin{tabular}{|c|c|c|c|}
    \hline
        Parameters & Result & Order $n^\ast$ & CPU time  \\
        \hline
        $K=10$, $h=0.552$ & Stable & $65$ & $150$s \\
        $K=10$, $h=0.553$ & Unstable & $65$ & $150$s \\
    \hline
    \end{tabular}
    \label{tab:ex2}
\end{table}

The estimated order $n^\ast=\mathcal{N}(\mathcal{E}(\eta_0))$ for our necessary and sufficient test of stability is reported on Tables~\ref{tab:ex1} and~\ref{tab:ex2}. For Example~\ref{ex2}, a map of orders in the $(K,h)$ plan is depicted in Fig~\ref{fig:U4}. One can see that order $n^\ast$ increases as parameters $h$ and $K$ increase. The exact limit between stable (on the right) and unstable (on the left) regions obtained using D-partition is superposed. As emphasized in Remark~\ref{rem:Lyapcond}, this line is excluded to our criterion.

The CPU time spent to compute $n^\ast$, evaluate the components of matrix $\mathbf{P}_{n^\ast}$ by induction and test the positivity of $\mathbf{P}_{n^\ast}$ is also reported on both Tables. The computational load is obviously increasing as the order increases. Actually, the processing time is not only impacted by the positivity test but also by the calculation of the integrals terms in matrix $\mathbf{P}_n$. By using the iterative relation given by Propositions~\ref{prop1},~\ref{prop2} and~\ref{prop3}, this time grows with the square of the order $n^\ast$. 

Other positivity tests of finite size $n^\ast$ for stability of time-delay systems have also been developed by Gomez et al. and Zhabko et al.. A brief comparison with~\cite{Mondie2021} is presented in Table~\ref{tab:ex1}. The approximation of the complete Lyapunov-Krasovskii functional is realized by Legendre polynomial approximation here, whereas a discretization with exponential kernels is used in~\cite{Mondie2021}. From the supergeometric convergence rate of Legendre approximation, our estimation of the minimal order $n^\ast=\mathcal{N}(\eps)$ satisfies $\eps\sim\e^{-n^\ast\log(n^\ast)}$. In~\cite{Mondie2021}, this estimation is limited to $\eps\sim\frac{1}{n^\ast}$. On Table~\ref{tab:ex1}, this order of magnitude significant difference is highlighted. Notice that this advantage has to be balanced with the shape of the matrix $\mathbf{P}_n$ and the ease of computing its components.

Finally, recalling the sufficient criterion of instability mentioned in Corollary~\ref{cor:CNS}. It permits to detect some unstable systems by testing $\mathbf{P}_n\succ 0$ for a limited number of order. For Example~\ref{ex2}, for low orders $n=\{1,\dots,5\}$, the corresponding unstable areas with respect to parameters $(K,h)$, denoted $\{\mathcal{U}_1,\dots,\mathcal{U}_5\}$, are drawn on Fig.~\ref{fig:U5}. These tests are not time consuming and already get an accurate information on the unstable regions of Example~\ref{ex2}. Notice that $\mathcal{U}_1$ already spans the main areas of instability. The hierarchical structure $\mathcal{U}_1\subset\dots\subset\mathcal{U}_5$ is also verified. More interestingly, the hard-to-reach areas are located when the eigenvalues crosses the imaginary axis from the left-half plane to the right-half plane (see red lines).

\begin{figure}[!t]
    \centering
    \includegraphics[width=9cm]{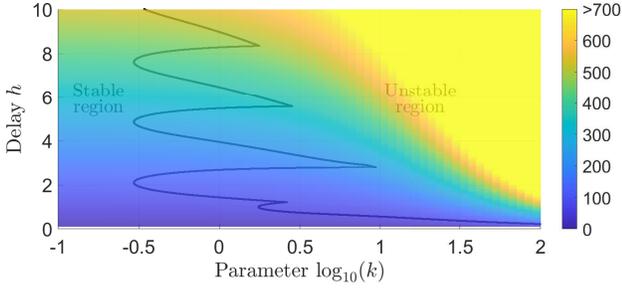}
    \caption{Example~\ref{ex2}: Required orders with respect to $(K,h)$.}
    \label{fig:U4}
\end{figure}

\begin{figure}[!t]
    \centering
    \includegraphics[width=9cm]{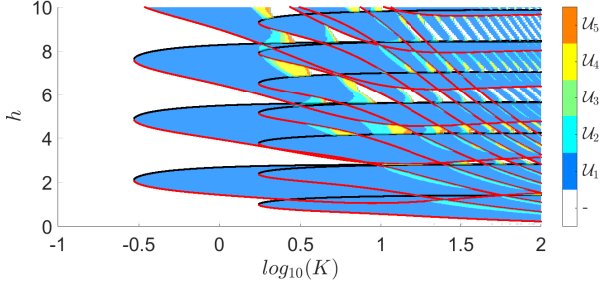}
    \caption{Example~\ref{ex2}: Unstable areas with respect to $(K,h)$.}
    \label{fig:U5}
\end{figure}


\section{Conclusions and perspectives}
This paper has been devoted to the formulation of a necessary and sufficient stability condition of time-delays systems, extending the result of paper~\cite{Mondie2021}. It derives from the positivity of the complete Lyapunov-Krasovskii functional, where an approximation of the Lyapunov matrix has been considered. Stability can then be linked with the positive definiteness of a certain matrix of finite size $n^\ast$, which depends on systems parameters. The originality of our work relies on the approximation techniques which employs polynomial coefficients instead of discretized elements. The supergeometric convergence rate satisfied by Legendre approximation demonstrates that our condition requires smaller orders $n^\ast$ than in~\cite{Mondie2021}. Based on recurrence relations satisfied by Legendre polynomials, our stability criterion can be easily implemented by induction.

Our approach still to be generalized to distributed~\cite{Mondie2017} and neutral~\cite{Mondie2016} time-delay systems. New tracks of research would be to cover other classes of delay systems and more complex infinite-dimensional systems by this methodology. The Legendre polynomial approximation technique could also be deployed to problems of controller or observer synthesis for infinite-dimensional systems.

\appendix

\section{Proofs of the necessary and sufficient stability conditions}

\subsection{Proof of Lemma~\ref{lem:CN}}\label{app:CN}

\begin{proof}
The proof can be found in \cite[Th.~3]{Mondie2017} with $W=I_m$ and follows arguments given in \cite[Th.~5.19]{Gu2003}. Firstly, we introduce the functional
\begin{equation}
    W(\varphi) = V(\varphi) - \int_{-h}^0 \begin{bsmallmatrix}\varphi(0)\\\varphi(\tau)\end{bsmallmatrix}^\top \begin{bsmallmatrix} \frac{\eta_0}{h} I_m & 0\\ 0 & \frac{1}{2}I_m\end{bsmallmatrix} \begin{bsmallmatrix}\varphi(0)\\\varphi(\tau)\end{bsmallmatrix}\dtau.
\end{equation}
Functional~$V$ is built so that the time derivative along the trajectories $x_t$ of system~\eqref{eq:tds} gives~\eqref{eq:Vdot}. Then, the time derivative of the functional $W$ gives
\begin{equation}
    \dot{W}(x_t) = - \begin{bsmallmatrix}x(t)\\x(t-h)\end{bsmallmatrix}^\top \begin{bsmallmatrix} \eta_0 \mathcal{H}(A) + \frac{1}{2}I_m &  \eta_0 A_d \\ \ast &  \frac{1}{2}I_m\end{bsmallmatrix} \begin{bsmallmatrix}x(t)\\x(t-h)\end{bsmallmatrix},
\end{equation}
for which there exists a sufficiently small $\eta_0>0$ such that $\dot{W}(x_t)\leq 0$. Then, integrating from $0$ to $\infty$ and assuming the exponential stability of~\eqref{eq:tds} yields $W(x_0) \geq 0$, for any initial conditions $x_0=\varphi$ in $\mathcal{C}_{pw}(-h,0;\mathbb{R}^{m})$. Thus, \eqref{eq:CN} holds with $\eta=\mathrm{min}(\eta_0,\frac{h}{2})$.
\end{proof}

\subsection{Proof of Lemma~\ref{lem:CS}}\label{app:CS}

\begin{proof}
The following proof is close to~\cite[Appendix]{Mondie2021} and to~\cite[Lemma 5]{Zhabko2019}. The only difference relies on the definition of set $\mathcal{S}$ that has been extended to $\mathcal{C}_{\infty}$ instead of $\mathcal{C}_{1}$. Let us denote $s_0=\alpha+ i\beta$ be the eigenvalue with positive real part of system~\eqref{eq:tds}. According to~\cite{Mondie2017lemma}, there exists a vector $C=C_1+iC_2$ such that $\norm{C_2}\leq\norm{C_1}=1$, $C_2^\top C_1=0$ and that $$(s_0I_m-A-A_d\e^{-hs_0})C=0.$$ Consequently, we have $ \alpha \leq \norm{s_0} \leq \norm{A} + \norm{A_d} = r$,
and
\begin{equation*}\label{eq:xtsol}
    \bar{x}(t) = \e^{\alpha t} \left(\mathrm{cos}(\beta t)C_1-\mathrm{sin}(\beta t)C_2\right),\;\forall t\in\mathbb{R},
\end{equation*}
is a solution to system~\eqref{eq:tds}. Then, thanks to~\eqref{eq:Vdot}, the derivatives of~$V(\bar{x}_t)$ along the trajectories of~\eqref{eq:tds} yields to 
\begin{equation}\label{eq:Vd}
    \dot{V}(\bar{x}_t) = - \norm{\bar{x}(t-h)}^2.
\end{equation}
After some calculations developed in~\cite{Mondie2021}, we also obtain that
\begin{equation}
    V(\bar{x}_0) \leq - \frac{\e^{-2\alpha h}}{4\alpha}\mathrm{cos}^2(b_0) \leq - \frac{\e^{-2rh}}{4r}\mathrm{cos}^2(b_0) = -\eta_0.
\end{equation}
Let $\varphi=\bar{x}_0$, which belongs to $\mathcal{C}_{\infty}(-h,0;\mathbb{R}^m)$ and satisfies $\norm{\varphi(0)}=\norm{x(0)}=1$. We finally prove by induction that $\norm{\varphi^{(k)}(\tau)}\leq r^k \e^{\alpha \tau}$, for any $\tau$ in $[-h,0]$. Initially, $\norm{\bar{x}(\tau)}\leq \e^{\alpha \tau}$ holds on $[-h,0]$. Then, assuming $\norm{\bar{x}^{(k)}(\tau)}\leq r^k \e^{\alpha \tau}$, since $\bar{x}$ satisfies~\eqref{eq:tds} and is infinitely differentiable, we obtain
\begin{equation*}
    \norm{\bar{x}^{(k+1)}(\tau)} \leq \norm{A}\norm{\bar{x}^{(k)}(\tau)} + \norm{A_d}\norm{\bar{x}^{(k)}(\tau-h)} \leq r^{k+1} \e^{\alpha \tau}.
\end{equation*}
Therefore, $\normC{\varphi^{(k)}}=\underset{[-h,0]}{\mathrm{sup}}\norm{\varphi^{(k)}(\tau)}\leq r^k$ for any $k\in\mathbb{N}$.
\end{proof}

\section{Proofs of the recursive relations}

\subsection{Proof of Proposition~\ref{prop1}}\label{app1}

\begin{proof}
The proof is based on the relation 
\begin{equation}\label{eq:legrec}
    l'_{k} - l'_{k-2} = \frac{2(2k-1)}{h}l_{k-1},\quad \forall k\geq 2,
\end{equation}
satisfied by Legendre polynomials~\cite{Gautschi2006legendre}.
To compute $\Gamma_k$, an integration by parts leads to
\begin{equation*}
\begin{array}{rcl}
    \Gamma_{k} - \Gamma_{k-2} &=&
    \displaystyle \int_{-h}^0\!\! \e^{(h+\tau)\M}(l_{k}\!-\!l_{k-2})(\tau)\dtau,\\
    &=& - \frac{2(2k-1)}{h}\M^{-1}\! \displaystyle\int_{-h}^0 \!\! e^{(h+\tau)\M}l_{k-1}(\tau)\dtau \\
    &&+\M^{-1}\left[\e^{(h+\tau)\M}(l_{k}-l_{k-2})(\tau)\right]_{-h}^0,
\end{array}
\end{equation*}
and knowing that $l_{k}(-h)=l_{k-2}(-h)=(-1)^{k}$ and $l_{k}(0)=l_{k-2}(0)=1$, the last term vanishes. Moreover, for $k\in\{0,1\}$, we directly obtain
\begin{equation*}
\begin{aligned}
    \Gamma_0 \!\!&=\!\! \int_{-h}^0\!\!\e^{(h+\tau)\M}\dtau \!=\! \M^{-1}(\e^{h\M}-I_{2m^2}),\\
    \Gamma_1 \!\!&=\!\! \int_{-h}^0\!\!\e^{(h+\tau)\M}\textstyle\left(\frac{2\tau+h}{h}\right)\dtau
    \!=\! \M^{-1}(\e^{h\M}\!+I_{2m^2}\!) \!-\! \frac{2}{h}\M^{-1}\Gamma_0,
\end{aligned}
\end{equation*}
which concludes the proof.
\end{proof}

\subsection{Proof of Proposition~\ref{prop2}}\label{app2}

\begin{proof}
The successive changes of variables $\tau_2'=-(\tau_2+h)$ and $\tau_1'=-(\tau_1+h)$ directly lead to
\begin{equation*}
\begin{aligned}
    \Gamma_{jk} &=\!\! \int_{-h}^0\!\! \left(\!\int_{0}^{-(\tau_1+h)}\!\!\!\!\!\! \e^{(\tau_1+\tau_2'+h)\M}l_k(-\tau_2'\!-\!h)\dtau_2\!\right)\! l_j(\tau_1) \dtau_1,\\
    &= \!\!\int_{0}^{-h}\!\!\left(\!\int_{0}^{\tau_1'}\!\!\e^{(\tau_2'-\tau_1')\M}l_k(-\tau_2'\!-\!h)\dtau_2'\!\right)\!l_j(-\tau_1'\!-\!h)\dtau_1',\\
    &= (-1)^{j+k}\Gamma_{jk}^\flat,
\end{aligned}
\end{equation*}
following the parity properties of Legendre polynomials, i.e. $l_k(-\tau-h)=(-1)^kl_k(\tau)$ for all $\tau$ in $[-h,0]$.
\end{proof}

\vspace{-0.4cm}

\subsection{Proof of Proposition~\ref{prop3}}\label{app3}

\begin{proof}
As in Appendix~\ref{app1}, an integration by parts and~\eqref{eq:legrec} ensure that $\bar{\Gamma}_{jk}^+$ satisfies the recursive relation
\begin{equation*}
\begin{array}{l}
    \bar{\Gamma}_{jk} - \bar{\Gamma}_{jk-2} \\
    =
    \displaystyle \int_{-h}^0\!\!\left(\int_{-h}^{\tau_1}\!\! \e^{(\tau_1-\tau_2)\M}(l_{k}\!-\!l_{k-2})(\tau_2)\dtau_2\right)l_j(\tau_1)\dtau_1,\\
    = \! \frac{2(2k-1)}{h}\M^{-1}\!\!\!\displaystyle \int_{-h}^0 \!\!\!\left(\!\int_{-h}^{\tau_1}\!\! \e^{(\tau_1-\tau_2)\M}l_{k-1}(\tau_2)\dtau_2\!\right)\!l_j(\tau_1)\dtau_1\\
    \qquad \displaystyle - \M^{-1} \int_{-h}^0 \!\!(l_{k}-l_{k-2})(\tau_1)l_j(\tau_1)\dtau_1,\\
    = \frac{2(2k-1)}{h}\M^{-1}\bar{\Gamma}_{jk-1} - \frac{h}{2j+1}\M^{-1}\big(\delta_{jk}-\delta_{jk-2}\big).
\end{array}
\end{equation*}
It is also important to notice that  
\begin{equation*}
\begin{aligned}
    \mathcal{D} &= \{(\tau_1,\tau_2);\,\tau_1\in[-h,0],\tau_2\in[-h,\tau_1]\},\\
    &= \{(\tau_1,\tau_2);\,\tau_2\in[-h,0],\tau_1\in[\tau_2,0]\},
\end{aligned}
\end{equation*}
which means that, using the change of coordinates $\tau_1'=\tau_2$ and $\tau_2'=\tau_1$, the following equations hold
\begin{equation*}
\begin{aligned}
    \bar{\Gamma}_{jk} &= \int_0^{-h}\!\!\int_{-h}^{\tau_1}\!\!\e^{(\tau_1-\tau_2)\M}l_j(\tau_1)l_k(\tau_2)\dtau_2\dtau_1,\\
    &= \int_0^{-h}\!\!\int_{\tau_2}^{0}\!\!\e^{(\tau_1-\tau_2)\M}l_j(\tau_1)l_k(\tau_2)\dtau_1\dtau_2,\\
    &= \int_0^{-h}\!\!\int_{\tau_1'}^{0}\!\!\e^{(\tau_2'-\tau_1')\M}l_k(\tau_1')l_j(\tau_2')\dtau_2'\dtau_1',\\
    &= \bar{\Gamma}_{kj}^\flat = (-1)^{j+k}\bar{\Gamma}_{kj}.
\end{aligned}
\end{equation*}
An integration by parts yields the initial values.
\end{proof}
\vspace{-0.4cm}

\bibliographystyle{plain}        
\bibliography{ref} 

\end{document}